%
%
%
%

%
%
%
%
%
%
%
%
%
%
%
%
%
\documentclass{amsart}
\usepackage{graphicx}
\usepackage{amscd}
\usepackage{amsmath}
\usepackage{amsfonts}
\usepackage{amssymb}
\theoremstyle{plain}
\newtheorem{theorem}{Theorem}[section]
\newtheorem{corollary}[theorem]{Corollary}

\newtheorem{proposition}[theorem]{Proposition}
\theoremstyle{definition}
\newtheorem{example}[theorem]{Example}

\newtheorem{definition}[theorem]{Definition}

\newtheorem{remark}[theorem]{Remark}
\theoremstyle{remark}

\newtheorem{notation}[theorem]{{\bf Notation}}


\begin{document}
\title{PSI-morphisms}

\author{Tiberiu Dumitrescu  and Mihai Epure }

\address{Facultatea de Matematica si Informatica,University of Bucharest,14 A\-ca\-de\-mi\-ei Str., Bucharest, RO 010014,Romania}
\email{tiberiu\_dumitrescu2003@yahoo.com, tiberiu@fmi.unibuc.ro, }

\address{Simion Stoilow Institute of Mathematics of the Romanian AcademyResearch unit 6, P. O. Box 1-764, RO-014700 Bucharest, Romania}\email{epuremihai@yahoo.com, mihai.epure@imar.ro}

\thanks{2020 Mathematics Subject Classification: Primary 13B10, Secondary 13B30.}

\begin{abstract}
\noindent 
We extend to ring morphisms the recent work of Mohamed Khalifa on PSI-extensions.
\end{abstract}

\maketitle

\section{Introduction}

In \cite{K}, Mohamed Khalifa  introduced and studied the so-called {\em prime submodule ideal ring extensions} (for short, { PSI-extensions}). 
An extension $A\subseteq B$ of integral domains is a {\em PSI extension} if every $A$-prime ideal of $B$ is  prime in $B$. Here  a proper ideal $I$ of $B$ is {\em $A$-prime} if  $ab\in I$ with $a\in A$ and  $b\in B$ implies that  $a\in I$ or $b\in I$.

The next theorem summarizes  a good deal of the results in \cite{K}.
Specifically, for $(i)$-$(iii)$, $(iv)$-$(v)$, $(vi)$, $(vii)$, $(viii)$, $(ix)$, $(x)$, $(xi)$, $(xii)$ see results (2.2), (2.3), (2.4), (2.7), (2.9), (2.11), (2.14), (3.2), (3.5) in \cite{K} respectively.

\begin{theorem} {\em (Khalifa)}\label{3}
Let $A  \subseteq B \subseteq C$  be  extensions of domains and $X$ an indeterminate.

$(i)$ If  $A \subseteq C$ is a PSI-extension, then so is $B \subseteq C$.

$(ii)$ For every multiplicative set $S$ of $A$, $A  \subseteq A_S$ is a PSI-extension.

$(iii)$ If $A \subseteq B$ is a PSI-extension, then so is 
 $A/(Q\cap A) \subseteq B/Q$ for each prime ideal $Q$ of $B$.

$(iv)$ If $A \subseteq B$ is a PSI-extension, then so is 
 $A_S \subseteq B_S$ for each   multiplicative set $S$ of $A$.

$(v)$ $A \subseteq B$ is a PSI-extension iff so is
 $A_P \subseteq B_P$ for each maximal ideal $P$ of $A$.

$(vi)$ $A \subseteq B$ is a PSI-extension  iff it is an  INC-extension and $\Omega(P)$ is prime in $B$ or $\Omega(P)=B$ for every prime ideal $P$ of $A$. Here $\Omega(P)=PB_P\cap B.$

$(vii)$ If $A \subseteq B$ have the same prime ideals, then $A \subseteq B$ is a PSI-extension.

$(viii)$ If $A \subseteq B$ is an integral PSI-extension and $B \subseteq C$ is a PSI-extension, then   $A \subseteq C$ is a PSI-extension.

$(ix)$ Suppose that $A$ is integrally closed in $B$ and let $\Gamma$ be the set of all intermediate rings between  $A$ and $B$.
Then $A \subseteq B$ is a PSI-pair , i.e. $A \subseteq T$ is a PSI-extension
for each $T\in \Gamma$ iff all residue field extensions of $A \subseteq T$ are algebraic for each $T\in \Gamma$.

$(x)$ If $D$ is a Pr\" ufer domain with quotient field $K$, then
$D + XK[[X]]\subseteq  K[[X]]$ is a PSI-pair.

$(xi)$  $A[X] \subseteq B[X]$ is a PSI-extension iff 
$A \subseteq B$ is a PSI-extension  and 
all residue field extensions of $A \subseteq B$ are trivial.

$(xii)$ Suppose that $A$ is a quasi-local integrally closed domain. Then 
$A[[X]] \subseteq T[[X]]$ is a PSI-extension for each overring $T$ of $A$ iff $A$ is a valuation domain.
\end{theorem}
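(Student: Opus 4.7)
The plan is to handle the two implications separately.

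For the direction $(\Leftarrow)$, assume $A$ is a valuation domain. Every overring $T$ of $A$ is of the form $A_P$ for some prime $P$ of $A$, so $T$ is again a valuation domain with the same quotient field as $A$. To prove $A[[X]] \subseteq T[[X]]$ is PSI I would apply criterion $(vi)$ of Theorem \ref{3}: verify the INC property and check that for every prime $Q$ of $A[[X]]$, the ideal $\Omega(Q) = Q T[[X]]_Q \cap T[[X]]$ is prime in $T[[X]]$ or equals $T[[X]]$. The totally ordered structure of $\operatorname{Spec}(A)$, together with the standard description of the primes of a power-series ring over a valuation domain, classifies the primes of $A[[X]]$ by whether they contain $X$ and by the prime of $A$ to which they contract, reducing the verification to a manageable case analysis.

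For $(\Rightarrow)$, assume that $A[[X]] \subseteq T[[X]]$ is PSI for every overring $T$ of $A$. Since $A$ is a quasi-local integrally closed domain, it is a valuation domain provided any two nonzero elements of $A$ are comparable under divisibility in $A$. Suppose, for contradiction, that this fails: pick nonzero $a, b \in A$ with $a/b \notin A$ and $b/a \notin A$. The goal is to find an overring $T$ for which $A[[X]] \subseteq T[[X]]$ is not PSI. A natural choice is $T = A[b/a]$: in $T[[X]]$, the power series $a - bX$ factors as $a\bigl(1 - (b/a)X\bigr)$, whose second factor is a unit, so $(a - bX)T[[X]] = aT[[X]]$; by contrast, in $A[[X]]$ the element $a - bX$ does not lie in $aA[[X]]$ since $b/a \notin A$. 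Using this asymmetry I would either display an explicit $A[[X]]$-prime ideal of $T[[X]]$ that fails to be prime, or exhibit a prime $Q$ of $A[[X]]$ for which $\Omega(Q)$ is proper but not prime in $T[[X]]$, violating criterion $(vi)$.

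The main obstacle is precisely the construction of this non-prime $A[[X]]$-prime ideal and the simultaneous verification of both properties. One has to combine the failure of comparability in $A$ with the concrete effect of inverting $1 - (b/a)X$ in $T[[X]]$, and craft a witness of divisibility by $a$ inside $T[[X]]$ that cannot be pulled back to $A[[X]]$, all while preserving the $A[[X]]$-prime condition. This is the delicate step where the quasi-local and integrally closed hypotheses on $A$ should be most tightly used.
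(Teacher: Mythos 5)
There is a genuine gap here, on two levels. First, note that Theorem \ref{3} is not proved in this paper at all: it is a survey of Khalifa's results and each item is cited to \cite{K} (part $(xii)$ is (3.5) there), so there is no internal proof to compare with; in any case your proposal addresses only one of the twelve parts, leaving $(i)$--$(xi)$ untouched, and so cannot stand as a proof of the stated theorem. Second, even restricted to part $(xii)$, both directions of your argument stop short of a proof. For $(\Leftarrow)$, the plan to verify criterion $(vi)$ by running through ``the standard description of the primes of a power-series ring over a valuation domain'' is not realistic: $Spec(V[[X]])$ for a valuation domain $V$ is not classified in any usable way (its behaviour is notoriously wild; even the Krull dimension of $V[[X]]$ can be infinite for a one-dimensional nondiscrete $V$), so a prime-by-prime check of $\Omega(Q)$ is not a ``manageable case analysis.'' A route that actually works, and is available inside this very paper, is to observe that every overring of a valuation domain $A$ is $T=A_P$ and that its maximal ideal $PA_P=P$ is contained in $A$; then Proposition \ref{29} (which rests on the common-ideal reduction of Proposition \ref{17}) gives directly that $A[[X]]\subseteq T[[X]]$ is a PSI-extension, with no analysis of $Spec(A[[X]])$ needed.

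For $(\Rightarrow)$, the decisive object --- an $A[[X]]$-prime ideal of $T[[X]]$ that is not prime, or equivalently a prime $Q$ of $A[[X]]$ with $\Omega(Q)$ proper and not prime --- is exactly what you defer to ``the delicate step.'' The observation that for incomparable $a,b$ the element $a-bX$ becomes an associate of $a$ in $T[[X]]$ with $T=A[b/a]$, while $a-bX\notin aA[[X]]$, is a sensible starting point, but nothing is derived from it: no candidate ideal is written down, its $A[[X]]$-primeness is not verified, and the quasi-local and integrally closed hypotheses (without which the implication is false) are never actually used. As it stands the proposal is an outline whose essential constructions are missing, so it does not establish part $(xii)$, let alone Theorem \ref{3} as a whole.
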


The purpose of this paper is to extend a part of the facts in Theorem \ref{3} to the more flexible setup of commutative ring morphisms. By {\em ring} we always mean a commutative unitary ring and our ring morphisms are unitary. Our notation and terminology is standard like in \cite{Kp}.

We give our key definition which extends the concept of PSI-extension  of \cite{K} to ring morphisms.

\begin{definition}\label{5}
Let $u:A\rightarrow B$ be a ring morphism.

$(i)$  A proper ideal $I$ of $B$ is said to be {\em $A$-prime} if  $u(a)b\in I$ with $a\in A$ and  $b\in B$ implies that  $u(a)\in I$ or $b\in I$.
It follows easily that $u^{-1}(I)$ is a prime ideal of $A$.

$(ii)$ Say that $u$ is a {\em PSI-morphism} if every $A$-prime ideal of $B$ is  prime in $B$. We say that a ring extension $C\subseteq  D$ is a 
{\em PSI-extension} if  the inclusion map $C\hookrightarrow D$ is a PSI-morphism.
\end{definition}

In Section 2, we study the PSI-morphisms. We prove that the  PSI-morphisms are those morphisms whose fiber rings are either zero or  fields (Theorem \ref{2}). An immediate consequence is that the spectral map of a PSI-morphism is injective (Corollary \ref{7}). Some examples of PSI-extensions are given in Examples \ref{21} and \ref{210}. 
Our Proposition \ref{9} extends parts $(ii)$ and $(vii)$ of  Theorem \ref{3} and also shows that an epimorphism is a PSI-morphism. 
It is easy to identify the minimal ring extensions which are PSI
(Proposition \ref{23}). The class of PSI-morphisms is closed under map composition (Theorem \ref{8}), thus  extending parts $(i)$ and $(viii)$ of  Theorem \ref{3}.  Corollary \ref{10} extends parts $(iii$-$v)$ of Theorem \ref{3}.

In Section 3, we study a special type of PSI-morphism. Call a ring morphism a {\em strong PSI-morphism} if its fiber maps are isomorphisms.
An epimorphism is a strong PSI-morphism (Proposition \ref{14}) but not conversely (Example \ref{18}). 
A PSI-morphism is strong iff its residue field extensions are trivial
(Theorem \ref{13}). Examples of strong PSI-morphisms using  Nagata idealization rings are provided by Proposition \ref{22}.
 In Theorem \ref{16}, we show that the class of strong
PSI-morphisms is stable under base extension and derive that a ring morphism is strong PSI iff the corresponding polynomial morphism 
is a PSI-morphism, thus extending part $(xi)$ of Theorem \ref{3}.
 Consequently, we prove that a finite strong PSI-morphism is surjective 
(Corollary  \ref{300})  
and a finite type strong PSI-morphism is an epimorphism 
(Theorem  \ref{301}). The class of (strong) PSI-morphisms is  closed under map composition (Theorem \ref{12}) and inductive limits (Proposition \ref{24}). If $ A\subseteq B$ is a ring extension, we can talk about the greatest intermediate ring $A\subseteq C \subseteq B$ which is strong PSI over $A$ (Proposition  \ref{26}). 
An extension of rings $A\subseteq B$ sharing an ideal $I$  is a (strong) PSI-extension iff so is $A/I\subseteq B/I$ (Proposition \ref{17}).
Consequently, $A[[X]]\subseteq  B[[X]]$ is a PSI-extension when $A\subseteq B$ is ring extension such that some maximal ideal $M$ of $B$ is contained in $A$ (Proposition \ref{29}), thus extending part $(xii)$ of Theorem \ref{3}.

\section{PSI-morphisms}

We recall some standard notation.

\begin{notation}\label{15}
Let $u:A\rightarrow B$ be a ring morphism.  Recall that if $P$ is a prime ideal  of $A$, then {\em fiber ring} of $u$ at $P$ is the ring 
$k_A(P)\otimes_A B$, where $k_A(P)=A_P/PA_P$ is the {\em residue field} of $A$ at $P$.
We have the canonical injective map $Spec(k_A(P)\otimes_A B)\rightarrow Spec(B)$  whose image is the set of prime ideals of $B$ lying over $P$ in $A$.
So $k_A(P)\otimes_A B$ is nonzero iff there is some prime ideal  of $B$ lies over $P$. 
When $v:C\rightarrow D$ is a canonical ring morphism, we shall write $C=D$ to mean that $v$ is an isomorphism. For instance, if
$Q\in Spec(B)$ and $P=u^{-1}(Q)$, then
$$k_A(P)=k_A(P)\otimes_A B=k_B(Q)$$  means  that  the canonical maps
$$k_A(P)\rightarrow  k_A(P)\otimes_A B\rightarrow k_B(Q)$$
are isomorphisms.
\end{notation}

We start with two simple results about $A$-primes.

\begin{proposition}\label{4}
Let $u:A\rightarrow B$ be a ring morphism, $I$ a proper ideal of $B$ and $P=u^{-1}(I)$. The following  are equivalent.

$(i)$ $I$ is $A$-prime.

$(ii)$ $B/I$ is a torsion-free $A/P$-module.

$(iii)$ $I$ is a contracted ideal via the ring morphism 
$B\rightarrow k_A(P)\otimes_A B$.
\end{proposition}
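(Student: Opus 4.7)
The plan is to establish the three-way equivalence by translating each condition into an element-level statement and then comparing them. For (i)$\Leftrightarrow$(ii), the first step is to note that $u(P)\subseteq I$, so $B/I$ is canonically an $A/P$-module. The $A$-prime condition $u(a)b\in I\Rightarrow u(a)\in I$ or $b\in I$ passes to quotients as the statement that $\bar a\cdot\bar b=0$ in $B/I$ forces $\bar a=0$ in $A/P$ or $\bar b=0$. Specializing to $b=u(a')$ yields primality of $P$, and the remaining content is exactly torsion-freeness of $B/I$ over the domain $A/P$. This translation runs in both directions.

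For (i)$\Leftrightarrow$(iii), the first task is to identify $k_A(P)\otimes_A B$ with $B_P/PB_P$, where $B_P$ denotes $B$ localized at $u(A\setminus P)$ and $PB_P$ stands for $u(P)B_P$. Since $u(P)\subseteq I$, we have $PB_P\subseteq IB_P$, so the extension of $I$ to $B_P/PB_P$ is $IB_P/PB_P$. I would then compute its contraction back to $B$: by a direct denominator-clearing argument, an element $b\in B$ lies in this contraction iff there exists $s\in A\setminus P$ with $u(s)b\in I$. Consequently $I$ is contracted iff $u(s)b\in I$ with $s\in A\setminus P$ forces $b\in I$, and this is (i) in disguise: when $a\in P$ the implication in (i) holds automatically since $u(a)\in u(P)\subseteq I$, while when $a\notin P$ one has $u(a)\notin I$, and (i) collapses to the contracted condition.

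The only point requiring care is the localization bookkeeping, because $u$ need not be injective; elements of $B_P$ are equivalence classes modulo $\{b\in B:\exists s\in A\setminus P,\ u(s)b=0\}$, so the equivalence between $b/1\in IB_P$ and the existence of $s\in A\setminus P$ with $u(s)b\in I$ requires one denominator-clearing step. This is routine, and the rest of the argument is a direct unpacking of the definitions.
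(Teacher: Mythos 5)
Your proposal is correct. The equivalence $(i)\Leftrightarrow(ii)$ is handled exactly as in the paper (a direct translation of the $A$-prime condition into the statement that $\bar a\bar b=0$ in $B/I$ forces $\bar a=0$ or $\bar b=0$, which also yields primality of $P$). Where you diverge is the link to $(iii)$: the paper proves $(ii)\Leftrightarrow(iii)$ in one line, observing that $k_A(P)$ is the quotient field of $A/P$, so torsion-freeness of $B/I$ over $A/P$ is precisely injectivity of $B/I\rightarrow k_A(P)\otimes_A B/I$, which is $(iii)$; you instead prove $(i)\Leftrightarrow(iii)$ directly, identifying $k_A(P)\otimes_A B$ with $B_P/PB_P$ (localization at $u(A\setminus P)$) and computing the contraction of the extended ideal as the saturation $\{b\in B:\ u(s)b\in I\ \mbox{for some}\ s\in A\setminus P\}$, with the denominator-clearing step correctly flagged since $u$ need not be injective. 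Your route is more elementary and self-contained (no appeal to the torsion-free-iff-injects-into-tensor-with-fraction-field characterization), at the cost of some localization bookkeeping; the paper's route is shorter and more conceptual. Incidentally, the explicit saturation formula you derive is exactly the description of the fiber $\alpha^{-1}(P)$ that the paper records later in Corollary \ref{7}$(ii)$, so your computation has independent value in this context.
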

\begin{proof}
As observed in Definition \ref{5}, $P$ is a prime ideal. The equivalence 
$(i)\Leftrightarrow (ii)$ follows easily from definitions.
Since $ k_A(P)$ is the quotient field of $A/P$, 
$(ii)$ holds iff the canonical map $B/I\rightarrow k_A(P)\otimes_A B/I$
is injective iff $(iii)$ holds.
\end{proof}

\begin{corollary}\label{6}
Let $u:A\rightarrow B$ be a ring morphism and $P$ a prime ideal of $A$. The following are equivalent.

$(i)$ All $A$-prime ideals of $B$ lying over $P$ are prime in $B$. 

$(ii)$  The fiber ring $k_A(P)\otimes_A B$ is a field.
\end{corollary}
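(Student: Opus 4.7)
The plan is to use Proposition \ref{4}(iii) to identify $A$-prime ideals of $B$ lying over $P$ with proper ideals of the fiber ring $R := k_A(P) \otimes_A B$ via contraction along the canonical morphism $\phi : B \to R$, and then to recognize condition (i) as saying ``every proper ideal of $R$ is prime,'' which will force $R$ to be a field.

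For the implication (ii)$\Rightarrow$(i), suppose $R$ is a field. If $I$ is an $A$-prime ideal of $B$ lying over $P$, then by Proposition \ref{4}(iii) we have $I = \phi^{-1}(J)$ for some proper ideal $J$ of $R$. Since the only proper ideal of a field is $(0)$, we get $I = \ker \phi$, which is prime because $R$ is a domain.

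For (i)$\Rightarrow$(ii), I would first verify that every proper ideal of $R$ pulls back to an $A$-prime ideal of $B$ over $P$: given $J \subsetneq R$ proper, $R/J$ is a nonzero $k_A(P)$-vector space, hence torsion-free as an $A/P$-module, and $B/\phi^{-1}(J)$ embeds into $R/J$, so Proposition \ref{4}(ii) gives that $\phi^{-1}(J)$ is $A$-prime over $P$. The key observation is then that $\phi^{-1}(J)$ is prime in $B$ iff $J$ is prime in $R$: the ``$\Leftarrow$'' direction is the standard contraction-of-prime fact, and for ``$\Rightarrow$'' I would use that $\phi$ sends $A \setminus P$ to units in $R$, so every element of $R$ has the form $\phi(s)^{-1}\phi(b)$ with $b \in B$ and $s \in A \setminus P$, which lets me clear denominators and lift any zero-divisor relation in $R/J$ to one in $B/\phi^{-1}(J)$. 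Granted this correspondence, condition (i) translates into ``every proper ideal of $R$ is prime.'' In particular $(0)$ is prime, so $R$ is a domain; and any hypothetical nonzero non-unit $r \in R$ would produce the proper ideal $(r^2)$, which is \emph{not} prime (we have $r\cdot r \in (r^2)$ but $r \notin (r^2)$, since $r = r^2 t$ yields $r(1-rt)=0$ and hence $r$ invertible). Thus every nonzero element of $R$ is a unit, i.e.\ $R$ is a field.

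The main obstacle is the primality-lifting direction ``$J$ prime $\Rightarrow \phi^{-1}(J)$ prime'' — this is where one must unwind the construction of the fiber as a localization-then-quotient in order to represent arbitrary elements of $R$ in terms of images of $B$ and units. A minor subtlety that should be noted is the degenerate case $R=0$, where (i) holds vacuously while (ii) as written fails; this is consistent with the ``zero or field'' dichotomy appearing in Theorem \ref{2}.
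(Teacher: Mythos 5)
Your proof is correct and takes essentially the same route as the paper: via Proposition \ref{4} the $A$-prime ideals of $B$ over $P$ are exactly the contractions of proper ideals of the fiber ring (which is a localization-then-quotient of $B$, so all its ideals are extended from $B$), whence $(i)$ translates into ``every proper ideal of $k_A(P)\otimes_A B$ is prime,'' i.e.\ the fiber is a field; you simply make explicit the prime-correspondence and the field argument that the paper's one-line proof leaves implicit. Your remark on the degenerate case $k_A(P)\otimes_A B=0$ is well taken --- the paper's statement glosses over it as well, and the correct ``zero or field'' dichotomy appears only in Theorem \ref{2}.
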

\begin{proof}
Note that every ideal of $k_A(P)\otimes_A B$ is extended from $B$. So, by Proposition \ref{4}, $(i)$ holds iff every ideal of  $k_A(P)\otimes_A B$ is prime iff $(ii)$ holds.
\end{proof}

We give our key tool in dealing with PSI-morphisms. 

\begin{theorem}\label{2}
For a  ring morphism $u:A\rightarrow B$, the following are equivalent:

$(i)$ $u$ is a  PSI-morphism.

$(ii)$ For every $ P\in Spec(A)$ the fiber  $k_A(P)\otimes_A B$ is a field or the zero ring.

$(iii)$  $u(A)\subseteq B$ is a PSI-extension.

\end{theorem}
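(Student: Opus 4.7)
The plan is to deduce all three equivalences directly from the two tools just developed, Proposition \ref{4} and Corollary \ref{6}, with essentially no new input.

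First I would dispatch (i)$\Leftrightarrow$(iii), which is really just an unpacking of the definition. The $A$-prime condition on a proper ideal $I\subseteq B$ requires that $u(a)b\in I$ force $u(a)\in I$ or $b\in I$ for every $a\in A$, $b\in B$; since the set $\{u(a):a\in A\}$ is exactly $u(A)$, this is the same as the $u(A)$-prime condition for $I$ with respect to the inclusion $u(A)\hookrightarrow B$. Hence the collections of $A$-primes and $u(A)$-primes of $B$ coincide as sublattices of the ideal lattice of $B$, and the two PSI conditions are literally the same.

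For (i)$\Leftrightarrow$(ii), my strategy is to stratify the $A$-primes of $B$ by their contractions to $A$: $u$ is a PSI-morphism iff, for each $P\in\mathrm{Spec}(A)$, every $A$-prime of $B$ lying over $P$ is prime in $B$. Corollary \ref{6} already identifies this fiberwise condition with the fiber $k_A(P)\otimes_A B$ being a field, whenever that fiber is nonzero. The only remaining case is when no $A$-prime of $B$ lies over $P$; here I would invoke Proposition \ref{4}, under which the $A$-primes of $B$ lying over $P$ correspond bijectively to the proper ideals of $k_A(P)\otimes_A B$ (every ideal of the fiber is extended from $B$, and Proposition \ref{4}(iii) says the $A$-primes over $P$ are precisely the contracted proper ideals). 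So the absence of $A$-primes over $P$ is equivalent to the fiber being the zero ring, and condition (ii) is exactly the disjunction of the two alternatives (fiber is a field, or fiber is zero).

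The main obstacle, such as it is, is simply the bookkeeping around the zero fiber: Corollary \ref{6} is stated under the implicit assumption that at least one $A$-prime of $B$ lies over $P$, and one must observe that when no such $A$-prime exists the PSI condition over $P$ is vacuous and corresponds precisely to the extension $PB=B$, i.e.\ to $k_A(P)\otimes_A B=0$. Nothing more substantial is required.
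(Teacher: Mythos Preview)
Your argument is correct and follows essentially the paper's approach: the paper likewise reduces $(i)\Leftrightarrow(ii)$ to Corollary \ref{6}, and your extra care with the zero-fiber case is a welcome clarification of a point the paper leaves implicit. One small slip: the absence of $A$-primes over $P$ is equivalent to $k_A(P)\otimes_A B=0$, i.e.\ to $PB_P=B_P$, but not in general to $PB=B$ (take $A=\mathbb{Z}$, $P=(0)$, $B=\mathbb{Z}/2\mathbb{Z}$); since you immediately restate the correct condition this does not affect the proof.

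For $(i)\Leftrightarrow(iii)$ you take a slightly different and more elementary route than the paper: you observe directly from the definition that the $A$-primes and the $u(A)$-primes of $B$ coincide, whereas the paper first establishes $(i)\Leftrightarrow(ii)$ and then notes that $u$ and the inclusion $u(A)\hookrightarrow B$ have the same nonzero fiber rings. Your argument is self-contained and avoids invoking the fiber description, while the paper's has the virtue of making the equivalence manifestly a corollary of the fiber criterion; both are short and either is fine.
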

\begin{proof}
$(i)\Leftrightarrow (ii)$ is covered by  Corollary \ref{6}.
$(i)\Leftrightarrow (iii)$ follows observing that $u$ and the inclusion morphism $u(A)\hookrightarrow B$ have the same nonzero fiber rings.
\end{proof}

\begin{corollary}\label{7}
Let $u:A\rightarrow B$ be a PSI-morphism. Then

$(i)$ The spectral map $\alpha:Spec(B)\rightarrow Spec(A)$ is injective.

$(ii)$ If $P\in Im(\alpha)$, then 
$$\alpha^{-1}(P) =\{b\in B|\ u(s)b\in PB \mbox{ for some } s\in A-P\}:=Q$$ and  $k_A(P)\otimes_A B = k_B(Q)$. 

$(iii)$ If $M\in Max(A)$, then $MB\in Max(B)$ or $MB=B$. 

$(iv)$ If $A$ is a field, then so is $B$.
\end{corollary}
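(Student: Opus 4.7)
The plan is to lean on Theorem~\ref{2} throughout: for every $P\in Spec(A)$, the fiber $k_A(P)\otimes_A B$ is either the zero ring or a field. All four parts should drop out of this single dichotomy together with the identification of primes of $B$ over $P$ with primes of the fiber ring (Notation~\ref{15}).

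For $(i)$, I would observe that two primes $Q_1,Q_2$ of $B$ lying over $P$ correspond to two primes of $k_A(P)\otimes_A B$ under the canonical injection. But the fiber is a field (being nonzero, since it has at least one prime), and a field has a unique prime ideal, so $Q_1=Q_2$.

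For $(ii)$, assume $P\in Im(\alpha)$. Then the fiber is a field, so its zero ideal is its unique prime, and the unique $Q$ lying over $P$ is the contraction of $(0)$ along $\pi:B\to k_A(P)\otimes_A B$. Unpacking $\pi$ as $B\to B/PB\to (A\smallsetminus P)^{-1}(B/PB)$ gives exactly the description $Q=\{b\in B\mid u(s)b\in PB \text{ for some }s\in A\smallsetminus P\}$. For the identification $k_A(P)\otimes_A B=k_B(Q)$, I would build maps in both directions: the surjection $B\twoheadrightarrow k_A(P)\otimes_A B$ has kernel $Q$ and lands in a field, so it factors as $B\twoheadrightarrow B/Q\hookrightarrow k_A(P)\otimes_A B$, which extends to $k_B(Q)=\mathrm{Frac}(B/Q)\hookrightarrow k_A(P)\otimes_A B$; conversely, every $s\in A\smallsetminus P$ has $u(s)\notin Q$ (since $Q\cap A=P$) hence becomes a unit in $k_B(Q)$, so $B\to k_B(Q)$ factors through $(A\smallsetminus P)^{-1}(B/PB)=k_A(P)\otimes_A B$. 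The two maps are mutually inverse.

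For $(iii)$, apply the dichotomy at $P=M\in Max(A)$: the fiber is $k_A(M)\otimes_A B=(A/M)\otimes_A B=B/MB$, which is a field or zero, i.e.\ $MB\in Max(B)$ or $MB=B$. Part $(iv)$ is the case $M=(0)$ of $(iii)$; since a morphism from a field is injective, $B\neq 0$, so $MB=(0)\neq B$, whence $B=B/MB$ is a field. No step is a genuine obstacle; the only point that demands care is verifying that the two canonical maps in $(ii)$ are inverse to each other, which comes down to tracking where an element $b/s$ of either ring goes.
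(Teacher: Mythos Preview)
Your proposal is correct and follows essentially the same route as the paper: both derive everything from Theorem~\ref{2} and the identification of the fiber ring with $(A\smallsetminus P)^{-1}(B/PB)$. The only stylistic difference is in part~$(ii)$: where you build mutually inverse maps between $k_A(P)\otimes_A B$ and $k_B(Q)$, the paper simply observes that the canonical map $k_A(P)\otimes_A B\to k_B(Q)$ is obtained from a field by further factorization and localization, hence is automatically an isomorphism---a slightly quicker one-directional argument, but with identical content.
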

\begin{proof}
$(i)$ and $(ii)$.
Let $P\in Im(\alpha)$. Then $k(P)\otimes_A B$ is a field 
(cf. Theorem \ref{2}), so $Q=\alpha^{-1}(P)$ is the kernel   of the   morphism $B\rightarrow k(P)\otimes_A B$. A short computation shows that $Q$ has the  indicated value.
The canonical map $k_A(P)\otimes_A B\rightarrow k_B(Q)$ is an isomorphism since  the second ring is obtained from the first one by factorization and localization. $(iii)$ If $MB\neq B$, then  $k(M)\otimes_A B=B/MB$ is a field (cf. Theorem \ref{2}), so $MB$ is a maximal ideal of $B$. $(iv)$ follows from $(iii)$.
\end{proof}

\begin{example}\label{21}
Let $A$ be a one-dimensional domain and $B$ an overring of $A$. Then    
$A\subseteq B$ is a PSI-extension iff $MB\in Max(B)$ or $MB=B$ for all $M\in Max(A)$, cf. part $(iii)$ of Corollary  \ref{7}.

Consider the particular case 
$A=\mathbb{Z}[\sqrt{d}]$, $B=\mathbb{Z}[(1+\sqrt{d})/2]$ where 
$d\in \mathbb{Z}$, $\sqrt{d}\notin \mathbb{Q}$ and $d$ is one modulo $4$. Then $A\subseteq B$ is a PSI-extension iff $d$ is five modulo $8$. Indeed, if  $d$ is one modulo $8$, then the fiber ring $k_A(2,1+\sqrt{d})\otimes_A B$ is isomorphic to $\mathbb{Z}_2 \times \mathbb{Z}_2$. On the other hand, if  $d$ is one modulo $8$, then every maximal ideal of $A$ extends to a maximal ideal of $B$. See also Proposition \ref{23}.
\end{example}

\begin{example}\label{210}
If $K$ is a field,  then $K[X]\subseteq K[[X]]$ is a PSI-extension, because $K[[X]][K(X)]=K((X))$ and $K\otimes_{K[X]} K[[X]]=K$.
Meanwhile, $A=\mathbb{Z}[X]\subseteq \mathbb{Z}[[X]]=B$ is not a PSI-extension
because 
$$k_A(X-6)\otimes_A B \simeq 
\mathbb{Q}\otimes_\mathbb{Z} \mathbb{Z}[[X]]/(X-6)\simeq 
\mathbb{Q}_2\otimes \mathbb{Q}_3 $$ where $\mathbb{Q}_p $ is the $p$-adic number field.
\end{example}

Let $u:A\rightarrow B$ be a ring morphism, $v:B\otimes_A B\rightarrow B$ the canonical ring morphism  induced by $u$ and $I=ker(v)$.
Recall that module of differentials  $\Omega_{B/A}$ is the $B$-module 
$I/I^2$.  Recall also that  $u$ is called an   {\em epimorphism} if the canonical map $v$ is an isomorphism, that is, $I=0$. Thus   $\Omega_{B/A}=0$ if $u$ is an epimorphism.

\begin{proposition}\label{9}
A ring morphism $u:A\rightarrow B$ is a PSI-morphism in each  case below.

$(i)$ Every prime ideal of $B$ is extended from $A$.

$(ii)$ $u$ is an epimorphism.

$(iii)$ $u$ is surjective.

$(iv)$ $u$ is the canonical map from $A$ to a fraction ring of $A$.

\end{proposition}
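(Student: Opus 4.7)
The plan is to reduce each of the four claims to Theorem \ref{2}: it suffices to verify, in each case, that every fiber ring $k_A(P)\otimes_A B$ is either zero or a field. An orienting remark up front: parts $(iii)$ and $(iv)$ will dissolve into $(ii)$ once I note that surjections and canonical localization maps $A\to A_S$ are ring epimorphisms, so I would prove $(ii)$ first and then read off $(iii)$ and $(iv)$ as immediate corollaries.

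For $(i)$, I would argue as follows. If every prime of $B$ is extended from $A$, then any prime $Q$ of $B$ lying above a given $P\in Spec(A)$ must satisfy $Q=(u^{-1}(Q))^{e}=PB$, so at most one prime of $B$ sits above $P$. Via the canonical bijection between $Spec(k_A(P)\otimes_A B)$ and the primes of $B$ lying over $P$, the fiber therefore has at most one prime ideal; and when such a prime exists, it is the image of $PB$, which is the zero ideal of the fiber. So either the fiber is zero, or the zero ideal is simultaneously prime and maximal in it, forcing it to be a zero-dimensional integral domain, hence a field.

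For $(ii)$, base change along $A\to k_A(P)$ preserves epimorphisms, so $k_A(P)\to k_A(P)\otimes_A B$ is an epimorphism. The central auxiliary fact I need is that any ring epimorphism out of a field $K$ has target zero or isomorphic to $K$. My planned proof of this lemma: if $K\hookrightarrow F$ is a nonzero epimorphism, then the two $K$-algebra maps $F\to F\otimes_K F$ sending $b\mapsto b\otimes 1$ and $b\mapsto 1\otimes b$ agree on $K$ and hence coincide, so $b\otimes 1=1\otimes b$ in $F\otimes_K F$ for every $b\in F$; expanding $b$ in a $K$-basis of $F$ containing $1$ and comparing coefficients in the product basis forces $b\in K$. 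I expect this auxiliary lemma to be the one non-routine step in the proposition.

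Parts $(iii)$ and $(iv)$ then fall out of $(ii)$: a surjection is manifestly a ring epimorphism (any two ring maps with the same source determined by their restriction to the image), and the canonical map $A\to A_S$ is a classical flat epimorphism for any multiplicative set $S$. Hence the fiber condition of Theorem \ref{2} is inherited and the PSI property follows.
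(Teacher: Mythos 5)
Your proposal is correct and follows essentially the same route as the paper: both reduce all four parts to the fiber criterion of Theorem \ref{2}, checking that each fiber $k_A(P)\otimes_A B$ is zero or a field. The only cosmetic differences are that you deduce $(iii)$ and $(iv)$ from $(ii)$ (the paper gets them from $(i)$, since primes of quotient rings and fraction rings are extended), and that you prove the key fact that a ring epimorphism out of a field is zero or an isomorphism directly via the $b\otimes 1=1\otimes b$ basis argument, whereas the paper simply cites Lazard \cite{L}.
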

\begin{proof}
We use Theorem \ref{2}. Let $P$ be a prime ideal of $A$. 
Suppose that $(i)$ holds. Since the hypothesis of $(i)$ is stable under factorization and localization, it is satisfied by  
$k_A(P)\rightarrow k_A(P) \otimes_AB$, so $k_A(P) \otimes_AB$ is a field.
If $(ii)$ holds, then  $k_A(P) = k_A(P)\otimes_A B$, cf.  \cite[Lemme 1.0' and Corollaire 1.3]{L}.  
The other assertions follow from $(i)$. 
\end{proof}

Note that a proper field extension $K\hookrightarrow L$ is a PSI-extension but not an epimorphism. 

Next we extend  Example \ref{21}. Note $\mathbb{Z}[\sqrt{d}]$ is an index two subring of $ \mathbb{Z}[(1+\sqrt{d})/2]$.
Recall that a proper ring extension $A\subset B$ is  
{\em minimal} \cite{FO} if there is no proper intermediate ring $A\subset C\subset B$. In this case, there exists a maximal ideal called the {\em crucial ideal} of $A\subset B$ such that $A_P=B_P$ for each $P\in Spec(A)-\{M\}$, cf. \cite[Th\'eor\`eme 2.2]{FO}. Clearly $A_M\subset B_M$.

\begin{proposition}\label{23}
Let $A\subset B$ be a minimal ring  extension with crucial ideal $M$. Then $A\subset B$ is a PSI-extension iff $A\subset B$ is not finite or $A\subset B$ is  finite and $M$ is a maximal ideal of $B$.
\end{proposition}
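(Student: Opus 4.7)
My plan is to apply Theorem \ref{2}: $A \subset B$ is a PSI-extension if and only if every fiber $k_A(P) \otimes_A B$, for $P \in \mathrm{Spec}(A)$, is either zero or a field. The minimality hypothesis collapses this to a single check at the crucial ideal, since for any $P \in \mathrm{Spec}(A) \setminus \{M\}$ the equality $A_P = B_P$ yields
\[
k_A(P) \otimes_A B \simeq B_P/PB_P = A_P/PA_P = k_A(P),
\]
which is automatically a field. So I am reduced to deciding when the fiber at $M$ is zero or a field.

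The second step is to appeal to the Ferrand--Olivier classification of minimal extensions in \cite{FO}. If $A \subset B$ is not finite, then it is a flat epimorphism of rings with $MB = B$; hence the fiber at $M$ vanishes and $A \subset B$ is PSI by Theorem \ref{2} (or alternatively by Proposition \ref{9}$(ii)$). If $A \subset B$ is finite (equivalently integral), then the crucial ideal coincides with the conductor, i.e.\ $M = (A:B)$; in particular $M$ is already an ideal of $B$ with $MB = M$, and the $k_A(M)$-algebra $B/M \simeq k_A(M) \otimes_A B$ is, by Ferrand--Olivier, of exactly one of three shapes: a quadratic field extension of $k_A(M)$ (inert), the product $k_A(M) \times k_A(M)$ (decomposed), or $k_A(M)[\varepsilon]/(\varepsilon^2)$ (ramified).

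Exactly in the inert case is $B/M$ a field, equivalently $M \in \mathrm{Max}(B)$. In the decomposed case $B/M$ has two distinct maximal ideals and in the ramified case $B/M$ has a nonzero nilpotent, so in both cases the fiber is not a field and $M$ is not maximal in $B$. Combining these observations with the first step yields the claimed equivalence: the PSI property holds iff either $A \subset B$ is not finite, or $A \subset B$ is finite and $M$ is a maximal ideal of $B$.

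I expect the main (modest) obstacle to be the clean invocation of Ferrand--Olivier: specifically, the classical facts that a non-integral minimal extension is a flat epimorphism with $MB = B$, and that for an integral minimal extension the crucial ideal equals the conductor $(A:B)$. Once these inputs are imported, everything follows mechanically from Theorem \ref{2}.
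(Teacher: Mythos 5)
Your proof is correct and follows essentially the same route as the paper: reduce to the fiber at the crucial ideal $M$ (all other fibers are fields since $A_P=B_P$), handle the non-finite case via Ferrand--Olivier's result that such a minimal extension is a flat epimorphism (hence PSI by Proposition \ref{9}), and settle the finite case by deciding when the fiber at $M$ is a field. The only divergence is in the finite case: the paper simply cites \cite{PP} for ``$MB$ is maximal in $B$ iff $M$ is,'' whereas you identify $M$ with the conductor and invoke the Ferrand--Olivier trichotomy for $B/M$; both work, but note that in the inert case $B/M$ is a \emph{minimal} field extension of $k_A(M)$, not necessarily a quadratic one --- a harmless slip, since all you use is that $B/M$ is a field exactly in that case.
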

\begin{proof}
The assertion is a consequence of the following remarks. If $A\subset B$ is not finite, then $A \hookrightarrow B$ is a flat epimorphism cf. \cite[Theorem 2.2]{FO}; so $A \hookrightarrow B$ is a PSI-extension cf. Proposition \ref{9}. Suppose that $A\subset B$ is  finite. Then $MB$ is a maximal ideal of $B$ iff so is $M$ cf.
\cite[Theorem 3.3]{PP}.
\end{proof}

Our next result extends parts $(i)$ and  $(viii)$ of Theorem \ref{3}.

\begin{theorem}\label{8}
Suppose that $u:A\rightarrow B$ and  $v:B\rightarrow C$ are ring morphisms.

$(i)$ If $u$ and $v$ are PSI-morphisms, then so is $vu$.

$(ii)$ If $vu$ is a PSI-morphism, then so is $v$.
\end{theorem}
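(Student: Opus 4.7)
The plan is to handle part (ii) directly from the definitions of $A$-prime and $B$-prime ideals, and to handle part (i) using the fiber-ring characterization of PSI-morphisms provided by Theorem~\ref{2}.

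For part (ii), I would assume $vu$ is a PSI-morphism and let $I$ be any $B$-prime ideal of $C$; the goal is to show $I$ is prime in $C$. The observation I would exploit is that every $B$-prime ideal of $C$ is automatically $A$-prime: if $(vu)(a)\,c\in I$ with $a\in A$ and $c\in C$, then $(vu)(a)=v(u(a))$ with $u(a)\in B$, so applying the $B$-primeness of $I$ to the pair $(u(a),c)$ yields $v(u(a))\in I$ or $c\in I$, that is, $(vu)(a)\in I$ or $c\in I$. Since $vu$ is PSI, this $A$-prime ideal is then automatically prime in $C$, which is exactly the conclusion for (ii). So (ii) is essentially a bookkeeping argument on the definition.

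For part (i), I would fix $P\in Spec(A)$ and use transitivity of tensor product to write
$$k_A(P)\otimes_A C \;=\; (k_A(P)\otimes_A B)\otimes_B C.$$
If the middle factor vanishes, then so does the left side. Otherwise, since $u$ is PSI, Theorem~\ref{2} forces $k_A(P)\otimes_A B$ to be a field. By Corollary~\ref{7}(ii) applied to $u$, this field is canonically $k_B(Q)$, where $Q$ is the unique prime of $B$ lying over $P$. Substituting this identification, the displayed tensor product becomes $k_B(Q)\otimes_B C$, which is a field or zero since $v$ is PSI. Another application of Theorem~\ref{2} then shows that $vu$ is PSI.

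The main point requiring care in (i) is the change-of-base step: I must genuinely reconcile $k_A(P)\otimes_A B$, viewed a priori as an $A$-algebra produced from the fiber of $u$ at $P$, with the $B$-algebra $k_B(Q)$, so that the remaining tensor product with $C$ can be taken over $B$ rather than over $A$. This compatibility is precisely what Corollary~\ref{7}(ii) supplies, so I expect no obstacle beyond invoking that corollary at the right moment.
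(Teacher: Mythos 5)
Your argument is correct in both parts, but the two parts sit differently relative to the paper. For $(i)$ you follow essentially the paper's route: transitivity of the tensor product, Theorem~\ref{2}, and the identification $k_A(P)\otimes_A B = k_B(Q)$ from Corollary~\ref{7}$(ii)$ (which is legitimately available in your nonzero-fiber case, since a nonzero fiber at $P$ means $P$ lies in the image of the spectral map); the only cosmetic difference is that you quantify over $P\in Spec(A)$ and split according to whether the fiber of $u$ vanishes, whereas the paper starts from a prime $M$ of $C$ and contracts, which amounts to the same case analysis. For $(ii)$, however, you take a genuinely different and more elementary route: you observe directly from Definition~\ref{5} that every $B$-prime ideal $I$ of $C$ is automatically $A$-prime for $vu$ (since $(vu)(a)c=v(u(a))c$ with $u(a)\in B$), so the PSI property of $vu$ immediately makes $I$ prime. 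The paper instead runs $(ii)$ through the fiber machinery: it uses Corollary~\ref{7} to get $k_A(P)\otimes_A C = k_C(M)$ and then argues that $k_B(Q)\otimes_B C$, being obtained from this field by factorization and localization, is again a field, before invoking Theorem~\ref{2}. Your definitional argument is shorter and avoids Theorem~\ref{2} and Corollary~\ref{7} entirely for this part; the paper's version buys uniformity, since it keeps both parts inside the fiber-ring formalism that drives the rest of the paper.
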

\begin{proof}
$(i)$.
Let $M$ be a prime ideal of $C$, $Q=v^{-1}(M)$, $P=u^{-1}(Q)$.
As $u$ and $v$ are PSI-morphisms, we have $k_A(P)\otimes_A B = k_B(Q)$ and 
$k_B(Q)\otimes_B C = k_C(M)$, cf. Corollary \ref{7}, so
$$k_A(P)\otimes_A C = 
k_A(P)\otimes_A B \otimes_B C = k_B(Q)\otimes_B C =  k_C(M).
$$
Apply Theorem \ref{2}.

$(ii)$ Let $M$ be a prime ideal of $C$, $Q=v^{-1}(M)$, $P=u^{-1}(Q)$.
As $vu$ is a PSI-morphism, we have $k_A(P)\otimes_A C = k_C(M)$, cf. Corollary \ref{7}. Since $k_B(Q)\otimes_B C=C_Q/QC_Q$ is obtained from 
$k_A(P)\otimes_A C=C_P/PC_P$ by factorization and localization, we get that 
$k_B(Q)\otimes_B C$ is a field. Apply Theorem \ref{2}.
\end{proof}

In $(ii)$  above, it may happen that $vu$ is a PSI-morphism, while $u$ is not. For instance, 
$\mathbb{Z}\subset \mathbb{Q}(i)$ is a PSI-extension, but  
$\mathbb{Z} \subset \mathbb{Z}[i]$ is not. A similar example is
$\mathbb{Z}[\sqrt{-7}] \subset \mathbb{Z}[(1+\sqrt{-7})/2] \subset \mathbb{Q}(\sqrt{-7})$, cf. Example \ref{21}.

\begin{corollary}\label{10}
Let $u:A\rightarrow B$ be a  ring morphism.

$(i)$ If $u$ is a PSI-morphism, then so is $A/I \rightarrow B/J$ for all   ideals  $I,J$  of $A$, $B$ respectively such that  $u(I)\subseteq J$. 

$(ii)$ If $u$ is a PSI-morphism, then so is $A_S \rightarrow B_T$
for all multiplicative sets
$S,T$    of $A$, $B$ respectively such that  
$u(S)\subseteq T$.

$(iii)$ $u$ is a PSI-morphism iff so is $A/P \rightarrow B/PB$ for each  
$P\in Spec(A)$.

$(iv)$ $u$ is a PSI-morphism iff so is $A_M \rightarrow B_M$ for each  
$M\in Max(A)$.

\end{corollary}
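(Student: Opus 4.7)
The plan is to combine Theorem \ref{2} (fiber-ring criterion for PSI) with the left-cancellation Theorem \ref{8}(ii), together with Proposition \ref{9}, which exhibits surjections and fraction maps as basic PSI-morphisms. For part (i), since $u(I)\subseteq J$, the composite $A\xrightarrow{u}B\xrightarrow{\pi}B/J$ factors through the quotient $\sigma\colon A\to A/I$ as $\bar u\sigma=\pi u$, defining a ring morphism $\bar u\colon A/I\to B/J$. Both $\pi$ and $\sigma$ are PSI by Proposition \ref{9}(iii); hence $\pi u$ is PSI by Theorem \ref{8}(i), and Theorem \ref{8}(ii) applied to $\bar u\sigma=\pi u$ then yields that $\bar u$ is PSI. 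Part (ii) follows from the same diagram with quotient maps replaced by fraction maps: the hypothesis $u(S)\subseteq T$ produces $A_S\to B_T$ fitting into the factorization $A\to A_S\to B_T = A\to B\to B_T$, and Proposition \ref{9}(iv) supplies the PSI on both localization maps, so Theorem \ref{8}(ii) finishes.

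For (iii), the forward direction is the special case $I=P$, $J=PB$ of (i), noting $u(P)\subseteq PB$. The converse rests on the base-change identity
$$k_{A/P}(0)\otimes_{A/P}(B/PB)\;=\;k_A(P)\otimes_A B,$$
which identifies the fiber of $u$ at $P$ with the fiber of $A/P\to B/PB$ at the zero prime of $A/P$; applying the PSI hypothesis on $A/P\to B/PB$ at its zero prime, Theorem \ref{2} forces this ring to be a field or zero, hence $u$ is PSI by Theorem \ref{2} once more. Part (iv) is entirely analogous: the forward direction applies (ii) with $S=A\setminus M$ and $T=u(S)$, while for the converse, given any $P\in Spec(A)$ one chooses a maximal ideal $M$ of $A$ containing $P$ and invokes
$$k_{A_M}(PA_M)\otimes_{A_M}B_M\;=\;k_A(P)\otimes_A B,$$
so the PSI hypothesis on $A_M\to B_M$ at the prime $PA_M$ together with Theorem \ref{2} concludes.

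The only mildly delicate step is the verification of the two displayed fiber-identification formulas; both follow from the associativity of tensor product together with the canonical identifications $k_{A/P}(0)\cong k_A(P)\cong k_{A_M}(PA_M)$. The rest is formal bookkeeping with Theorem \ref{8}, so in spirit (i)--(iv) are compact formal consequences of closure of PSI-morphisms under composition and its left-cancellation counterpart.
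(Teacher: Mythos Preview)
Your proof is correct and follows essentially the same approach as the paper: parts (i) and (ii) use Proposition \ref{9} together with the composition and left-cancellation properties of Theorem \ref{8}, and parts (iii)--(iv) reduce to Theorem \ref{2} via the fiber identifications you display. The paper phrases the converses of (iii) and (iv) slightly more globally (the collection of fiber rings of $u$ coincides with the union, over $P$ resp.\ $M$, of the fiber rings of $A/P\to B/PB$ resp.\ $A_M\to B_M$), but the content is identical to your pointwise argument.
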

\begin{proof}
$(i)$ By Proposition \ref{9} and Theorem \ref{8}, we get that 
$A \rightarrow B/J$ is a PSI-morphism, hence so is  $A/I \rightarrow B/J$.
A similar argument works for $(ii)$.
Assertion $(iii)$ follows from Theorem \ref{2} and the fact that the fiber rings of $u$ is the union of all fiber rings of $A/P \rightarrow B/PB$ for  
$P\in Spec(A)$. A similar argument works for $(iv)$.
\end{proof}

\section{Strong PSI-morphisms}

In this section we study a special type of PSI-morphim.  The motivation for doing that comes from the fact that the class of PSI-morphisms is not closed under base extension. A simple example is the field  PSI-extension $\mathbb{R}\subset \mathbb{C}$ whose polynomial extension $\mathbb{R}[X]\subset \mathbb{C}[X]$ is not PSI, cf. part $(xi)$ of Theorem \ref{3}.

\begin{definition}\label{11}
A  ring morphism $u:A\rightarrow B$ is said to be a {\em strong PSI-morphism} if  
$k_A(P)= k_A(P)\otimes_A B$ for all   $P\in Spec(A)$ with 
$k_A(P)\otimes_A B\neq 0$.
\end{definition}

\begin{proposition}\label{14}
  An epimorphism is a strong PSI-morphism
\end{proposition}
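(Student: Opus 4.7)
The plan is to observe that the equality demanded by Definition~\ref{11} is already produced inside the proof of Proposition~\ref{9}(ii): when $u:A\to B$ is an epimorphism, the canonical map $k_A(P)\to k_A(P)\otimes_A B$ is an isomorphism for every $P\in Spec(A)$, by \cite[Lemme 1.0' et Corollaire 1.3]{L}. Restricting attention to those primes $P$ for which $k_A(P)\otimes_A B\neq 0$, one recovers verbatim the condition of Definition~\ref{11}, so the proposition is essentially a relabeling of what was already used in Proposition~\ref{9}(ii).

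If a self-contained argument is preferred, I would run two short steps. First, epimorphisms in the category of commutative rings are stable under base extension: given any $A$-algebra $C$, the morphism $C\to C\otimes_A B$ is again an epimorphism, which is a formal consequence of the tensor product being a pushout of commutative rings. Applied with $C=k_A(P)$, this exhibits $k_A(P)\to k_A(P)\otimes_A B$ as an epimorphism. Second, a ring epimorphism $\varphi:K\to R$ from a field $K$ to a nonzero ring $R$ is automatically an isomorphism: $\ker\varphi$ is an ideal of $K$ and cannot be $K$ itself (else $R=0$), so $\varphi$ is injective; the epimorphism property then yields $r\otimes 1=1\otimes r$ in $R\otimes_K R$ for every $r\in R$, which after expansion against a $K$-basis of $R$ containing $1$ forces $\dim_K R=1$.

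Combining the two steps closes the proof: for every $P\in Spec(A)$ with $k_A(P)\otimes_A B\neq 0$, the canonical morphism $k_A(P)\to k_A(P)\otimes_A B$ is an epimorphism from a field onto a nonzero ring, hence an isomorphism, which is exactly what Definition~\ref{11} requires. There is no serious obstacle here; the only bookkeeping subtlety is to remember the convention fixed in Notation~\ref{15} that ``$k_A(P)=k_A(P)\otimes_A B$'' is shorthand for the canonical map being an isomorphism rather than an abstract equality of rings.
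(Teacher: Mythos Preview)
Your proposal is correct and your first paragraph is essentially identical to the paper's proof, which simply reads ``Use the proof of part $(ii)$ of Proposition~\ref{9}.'' The additional self-contained argument you supply (base change of epimorphisms plus the fact that an epimorphism out of a field onto a nonzero ring is an isomorphism) is a valid elaboration of exactly the content of \cite[Lemme~1.0' and Corollaire~1.3]{L} that the paper invokes.
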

\begin{proof}
Use the proof of part $(ii)$ of Proposition \ref{9}.
\end{proof}

The converse of Proposition \ref{14} is not true as shown by Example \ref{18}. 

\begin{example}\label{180}
It is easy to check directly that the diagonal map 
$\mathbb{Z}\hookrightarrow \mathbb{Z}[1/2]\times \mathbb{Z}_2$ is a strong  PSI-morphism. More generally, if $A$ is a domain and $b$ a nonzero nonunit of $A$, then the diagonal map $A\hookrightarrow A[1/b]\times A/bA$ is an epimorphism \cite[page 3-09]{R}, so it is a strong  PSI-morphism, cf. Proposition  \ref{14}.
\end{example}

The strong PSI-morphisms are the PSI-morphisms with trivial residue field extensions.

\begin{theorem}\label{13}
For a  ring morphism $u:A\rightarrow B$ the following are equivalent.

$(i)$ $u$  is a strong PSI-morphism.

$(ii)$ $u$ is a  PSI-morphism and $k_A(u^{-1}(Q))=k_B(Q)$ for each  $Q\in Spec(B)$.

$(iii)$ $k_A(P)\otimes_A B$ is a $k_A(P)$-vector space of dimension $\leq 1$ for each $P\in Spec(A)$.
\end{theorem}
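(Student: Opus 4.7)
My plan is to establish $(i)\Leftrightarrow (iii)$ directly from Definition \ref{11}, then tie both to $(ii)$ through the fiber-ring characterization of PSI-morphisms in Theorem \ref{2} and the identification $k_A(P)\otimes_A B = k_B(Q)$ furnished by Corollary \ref{7}(ii).

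First I would handle $(i)\Rightarrow (iii)$: if $u$ is strong PSI, then for every $P\in Spec(A)$ the fiber $k_A(P)\otimes_A B$ is either the zero ring (dimension $0$) or, under the canonical map, equal to $k_A(P)$ (dimension $1$ over itself); in either case the dimension is at most $1$. For $(iii)\Rightarrow (i)$, I observe that a $k_A(P)$-vector space of dimension $\le 1$ is either the zero ring or has the structure map $k_A(P)\to k_A(P)\otimes_A B$ a nonzero linear map between $1$-dimensional vector spaces, hence an isomorphism; nonzero because $1\otimes 1$ is the unity of a nonzero ring.

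Next, for $(i)\Rightarrow (ii)$: strong PSI makes every fiber ring either zero or (canonically) the field $k_A(P)$, so Theorem \ref{2} already gives that $u$ is PSI. Given $Q\in Spec(B)$ with $P=u^{-1}(Q)$, Corollary \ref{7}(ii) yields $k_A(P)\otimes_A B = k_B(Q)$, which combined with $k_A(P) = k_A(P)\otimes_A B$ gives $k_A(P) = k_B(Q)$. Conversely, for $(ii)\Rightarrow (i)$, if $k_A(P)\otimes_A B\neq 0$ then some prime $Q$ of $B$ lies over $P$; since $u$ is PSI, Corollary \ref{7}(ii) again yields $k_A(P)\otimes_A B = k_B(Q)$, and the trivial residue field hypothesis $k_A(P) = k_B(Q)$ then transports this to $k_A(P) = k_A(P)\otimes_A B$, which is strong PSI.

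I do not expect any real obstacle, as all the substantive content has been packaged into Theorem \ref{2} and Corollary \ref{7}. The only care point is in $(iii)\Rightarrow (i)$: one must verify that the canonical structure map is actually an isomorphism and not merely that source and target share the same dimension, which is immediate once one notes that $1\otimes 1\neq 0$ in the nonzero ring $k_A(P)\otimes_A B$.
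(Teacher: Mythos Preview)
Your proposal is correct and follows essentially the same approach as the paper, which tersely says to combine Definition \ref{11}, Theorem \ref{2}, and the injectivity of the structure map $k_A(P)\to k_A(P)\otimes_A B$. Your explicit invocation of Corollary \ref{7}(ii) for the $(i)\Leftrightarrow(ii)$ direction and your observation that $1\otimes 1\neq 0$ forces the structure map to be an isomorphism in the one-dimensional case are exactly the details the paper leaves to the reader.
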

\begin{proof}
Use Definition \ref{11}, Theorem \ref{2} and the fact that $k_A(P)\rightarrow k_A(P)\otimes_A B$ is an injective ring morphism.
\end{proof}

\begin{proposition}\label{22}
Let $A$ be a ring, $M$ an $A$-module and $A(+)M$ the Nagata idealization ring of $M$. The canonical map 
$A\rightarrow A(+)M$ is a strong PSI-morphism iff 
$k_A(P)\otimes_A M=0$ for all   $P\in Spec(A)$.
\end{proposition}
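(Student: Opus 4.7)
The plan is to reduce the problem to a direct computation of the fiber rings of $u:A\to A(+)M$ and then invoke the characterization of strong PSI-morphisms given in Theorem \ref{13}.

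First, I would fix $P\in\mathrm{Spec}(A)$, set $K=k_A(P)$ and $B=A(+)M$, and show that the fiber ring admits the canonical identification
\[
K\otimes_A B \;\cong\; K(+)(K\otimes_A M),
\]
i.e.\ that Nagata idealization commutes with base change. As an $A$-module, $B=A\oplus M$, so $K\otimes_A B = K\oplus(K\otimes_A M)$; the product $(a,m)(a',m')=(aa',am'+a'm)$ on $B$ tensors to the idealization product on $K(+)(K\otimes_A M)$, and the structure morphism $K\to K\otimes_A B$ corresponds to the inclusion $K\hookrightarrow K(+)(K\otimes_A M)$. The key observation is then that this fiber ring is never zero, since it contains $K\neq 0$ as a direct summand.

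Next, I apply Theorem \ref{13}$(iii)$: $u$ is a strong PSI-morphism iff $\dim_K(K\otimes_A B)\le 1$ for every $P\in\mathrm{Spec}(A)$. Combined with the computation above,
\[
\dim_K(K\otimes_A B)\;=\;1+\dim_K(K\otimes_A M),
\]
and since this is always at least $1$, the condition $\dim_K(K\otimes_A B)\le 1$ is equivalent to $\dim_K(K\otimes_A M)=0$, that is, $k_A(P)\otimes_A M=0$. Running this over all $P\in\mathrm{Spec}(A)$ yields the desired equivalence. (Alternatively, one could argue directly from Definition \ref{11}: since the fiber is always nonzero, the condition $k_A(P)=k_A(P)\otimes_A B$ must hold for every $P$, and the same computation shows this is equivalent to $k_A(P)\otimes_A M=0$.)

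The only step that requires genuine verification is the isomorphism $K\otimes_A(A(+)M)\cong K(+)(K\otimes_A M)$ as rings, but this is routine once one writes down the multiplication on both sides. Everything else is an immediate consequence of Theorem \ref{13}. I do not foresee a serious obstacle.
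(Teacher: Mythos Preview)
Your proof is correct and follows essentially the same approach as the paper: compute the fiber $k_A(P)\otimes_A(A(+)M)$ as the $k_A(P)$-vector space $k_A(P)\oplus(k_A(P)\otimes_A M)$ and apply Theorem~\ref{13}. Your version is simply more detailed, making explicit the ring structure and the non-vanishing of the fiber, whereas the paper states the vector-space decomposition in one line.
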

\begin{proof}
We have $k_A(P)\otimes_A A(+)M\simeq k_A(P)\oplus (k_A(P)\otimes_A M)$ as   
$k_A(P)$-vector spaces, so Theorem \ref{13} applies.
\end{proof}

We exhibit   a strong PSI-morphism which is not an epimorphism.

\begin{example}\label{18}
Let $u$ be the canonical map $\mathbb{Z}\rightarrow \mathbb{Z} (+)\mathbb{Q}/\mathbb{Z}:=B$. 
As $K\otimes_\mathbb{Z}  \mathbb{Q}/\mathbb{Z}=0$ for $K=\mathbb{Q}$ or $\mathbb{Z}_p$ with $p$ prime, it follows that $u$ is a strong PSI-morphism, cf.  Proposition \ref{22}.
On the other hand we have
$$ B\otimes_\mathbb{Z} B \simeq 
\mathbb{Z} \oplus \mathbb{Q}/\mathbb{Z} \oplus \mathbb{Q}/\mathbb{Z} \not\simeq \mathbb{Z} \oplus \mathbb{Q}/\mathbb{Z} = B
$$
as Abelian groups, so $u$ is not an epimorphism. 
Writing $ \mathbb{Q}/\mathbb{Z}$ as an inductive limit of the ciclic groups $\mathbb{Z}_{n!}$, it can be shown that  $\Omega_{B/\mathbb{Z}}$ is null, so
 the kernel of the canonical map $B\otimes_\mathbb{Z} B \rightarrow B$
is a nonzero idempotent nilideal, cf. \cite[Proposition 1.5]{L}. Note also that $B$ is not $\mathbb{Z}$-flat because it has nonzero torsion. We were not able to find an example of a strong PSI-morphism with nonzero module of differentials.
\end{example}

In the light of Theorem \ref{13},  part $(xi)$ of Theorem \ref{3} shows (in our terminology) that   a domain extension 
$A \subseteq B$ is  strong PSI  iff the polynomial extension 
$A[X] \subseteq B[X]$ is a PSI-extension. 
We extend this result by showing that the class of strong PSI-morphisms is stable under base extension.

\begin{theorem}\label{16}
For a ring morphism $u:A\rightarrow B$ the following are equivalent.

$(i)$ $u$ is a strong PSI-morphism.

$(ii)$ $u\otimes_A C:C\rightarrow C \otimes_A B$ is a strong PSI-morphism
for each ring morphism $v:A\rightarrow C$.

$(iii)$ $u\otimes_A C:C\rightarrow B \otimes_A C$ is a  PSI-morphism for each ring morphism $A\rightarrow C$.

$(iv)$ $u\otimes_A A[X]:A[X]\rightarrow B[X]$ is a  PSI-morphism.
\end{theorem}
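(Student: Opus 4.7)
The plan is to establish the cycle $(i) \Rightarrow (ii) \Rightarrow (iii) \Rightarrow (iv) \Rightarrow (i)$. Of these, $(ii) \Rightarrow (iii)$ is immediate because Theorem \ref{13} shows any strong PSI-morphism is PSI, and $(iii) \Rightarrow (iv)$ is just the specialization $C = A[X]$.

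For $(i) \Rightarrow (ii)$, fix $v \colon A \to C$ and $Q \in Spec(C)$, and set $P = v^{-1}(Q)$. There is a canonical injection of fields $k_A(P) \hookrightarrow k_C(Q)$, and by associativity of the tensor product
\[
k_C(Q) \otimes_C (C \otimes_A B) \;=\; k_C(Q) \otimes_A B \;=\; k_C(Q) \otimes_{k_A(P)} \bigl(k_A(P) \otimes_A B\bigr).
\]
By $(i)$ the inner factor is $0$ or $k_A(P)$, so the fiber of $u \otimes_A C$ at $Q$ is $0$ or $k_C(Q)$, which is strong PSI.

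The main obstacle is $(iv) \Rightarrow (i)$. Fix $P \in Spec(A)$ and set $k := k_A(P)$ and $L := k \otimes_A B$; the goal is to prove $L = 0$ or $L = k$. Since $B[X] = B \otimes_A A[X]$, a short associativity calculation reduces the fiber of $u \otimes_A A[X]$ at any $\mathfrak{q} \in Spec(A[X])$ to $k_{A[X]}(\mathfrak{q}) \otimes_A B$, which by $(iv)$ must be $0$ or a field. I probe this against three primes of $A[X]$ lying over $P$. First, $\mathfrak{q}_1 := PA[X] + (X)$ has residue field $k$, so its fiber is $L$; thus $L$ is $0$ or a field, and I may assume $k \hookrightarrow L$. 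Second, $\mathfrak{q}_2 := PA[X]$ has residue field $k(X)$, so its fiber is $k(X) \otimes_k L = S^{-1}L[X]$ with $S = k[X] \setminus \{0\}$; for this localization of the PID $L[X]$ to be a field, every $\ell \in L$ must be a root of some nonzero element of $k[X]$, i.e., $L/k$ is algebraic. Third, if $L \neq k$, pick $\beta \in L \setminus k$ with minimal polynomial $g \in k[X]$ of degree $\geq 2$; the prime $\mathfrak{q}_3$ of $A[X]$ over $P$ corresponding to $(g) \in Spec(k[X])$ has residue field $E = k[X]/(g)$, so its fiber is $E \otimes_k L = L[X]/(g)$, which fails to be a domain because $g = (X - \beta)h$ in $L[X]$ with $\deg h \geq 1$. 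This contradicts $(iv)$, so $L = k$, and $u$ is strong PSI.
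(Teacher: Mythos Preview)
Your proof is correct. The cycle $(i)\Rightarrow(ii)\Rightarrow(iii)\Rightarrow(iv)$ matches the paper's argument essentially verbatim, the tensor-associativity computation for $(i)\Rightarrow(ii)$ being identical.

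The genuine difference is in $(iv)\Rightarrow(i)$. The paper proceeds by first invoking Corollary~\ref{10} to deduce that $A\to B$ is PSI (so $k_A(P)\otimes_A B=k_B(Q)$ is a field) and that the induced map $k_A(P)[X]\to k_B(Q)[X]$ is PSI, and then cites \cite[Lemma 3.1]{K} to conclude $k_A(P)=k_B(Q)$. Your argument is a self-contained unpacking of exactly this last lemma: your probe at $\mathfrak{q}_1$ recovers that $L$ is a field, the probe at $\mathfrak{q}_2=PA[X]$ forces $L/k$ to be algebraic (since $S^{-1}L[X]$ is a field only if every $X-\ell$ meets $k[X]\setminus\{0\}$), and the probe at $\mathfrak{q}_3$ corresponding to a minimal polynomial $g$ of degree $\geq 2$ yields the non-domain $L[X]/(g)$, ruling out any proper algebraic extension. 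The only point worth making explicit is that the prime $\mathfrak{q}_3$ with residue field $k[X]/(g)$ really exists in $Spec(A[X])$: this follows because the primes of $A[X]$ lying over $P$ are in bijection with $Spec(k[X])$ via the fiber identification $k_A(P)\otimes_A A[X]=k[X]$, and residue fields are preserved under this bijection. With that understood, your route avoids the external reference to \cite{K} at the cost of a few extra lines, which is a fair trade.
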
 
\begin{proof}
$(i)\Rightarrow (ii)$ Set $D=C \otimes_A B$. Let $M\in Spec(C)$ such that $k_C(M)\otimes_C D$ is nonzero and let $P=v^{-1}(M)$. Note that $k_A(P)\otimes_A B$ is nonzero, so $k_A(P)= k_A(P)\otimes_A B$. We have 
$$k_C(M)\otimes_C D =
k_C(M)\otimes_C C \otimes_A B =
k_C(M) \otimes_A B =
$$
$$
= k_C(M) \otimes_{k_A(P)} k_A(P) \otimes_A B =
k_C(M) \otimes_{k_A(P)} k_A(P) = k_C(M).
$$
%
The implications $(ii)\Rightarrow (iii) \Rightarrow (iv)$ are clear.

$(iv)\Rightarrow (i)$ Let $P\in Spec(A)$ such that $k_A(P)\otimes_A B$
is nonzero. As $A[X]\rightarrow B[X]$ is a  PSI-morphism, we get that 
$A\rightarrow B$ and $k_A(P)[X]\rightarrow k_B(Q)[X]$ are PSI-morphisms,
cf. parts $(i$-$ii)$ of Corollary \ref{10}.
Thus $ k_A(P) \otimes_A B = k_B(Q) = k_A(P)$ by Theorem \ref{2} and \cite[Lemma 3.1]{K}.
\end{proof}

If we add some finitness condition, a strong PSI-morphism gets a particular form.

\begin{corollary}\label{300}
A finite strong PSI-morphism is    surjective.
\end{corollary}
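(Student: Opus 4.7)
The plan is to use a Nakayama argument on the cokernel of $u$, reduced modulo each maximal ideal of $A$. Let $u:A\rightarrow B$ be a finite strong PSI-morphism. Since $u$ is finite, $B$ is a finitely generated $A$-module, so the cokernel $C:=B/u(A)$ is also a finitely generated $A$-module. Showing $C=0$ is equivalent to showing $u$ is surjective.

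By Nakayama's lemma it suffices to prove $C_M=MC_M$ (equivalently $k_A(M)\otimes_A C=0$) for every $M\in\mathrm{Max}(A)$, since then $C_M=0$ for every $M$, hence $C=0$. For each $M\in\mathrm{Max}(A)$, tensor the exact sequence $A\xrightarrow{u} B\to C\to 0$ over $A$ with $k_A(M)$ to obtain the right-exact sequence
\[
k_A(M)\longrightarrow k_A(M)\otimes_A B\longrightarrow k_A(M)\otimes_A C\longrightarrow 0.
\]
If the middle term is zero, then $k_A(M)\otimes_A C=0$. Otherwise, by Definition \ref{11} (the strong PSI hypothesis), the canonical map $k_A(M)\rightarrow k_A(M)\otimes_A B$ is an isomorphism, in particular surjective, and again $k_A(M)\otimes_A C=0$. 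In either case the needed vanishing holds, so Nakayama concludes the argument.

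The key conceptual input is Definition \ref{11}, which tells us that the first arrow in the tensored sequence is either the zero map into a zero module or an isomorphism; both scenarios force the cokernel's fiber to vanish. I do not expect a real obstacle: the only place one must be careful is in applying Nakayama to $C_M$ as an $A_M$-module (which is legitimate because $C$ is finitely generated over $A$ and $M_M$ is the Jacobson radical of $A_M$). No finiteness on $u$ beyond $B$ being a finitely generated $A$-module is actually used, and dropping it would break the Nakayama step.
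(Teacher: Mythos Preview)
Your proof is correct and follows essentially the same Nakayama argument as the paper: both use that the strong PSI hypothesis forces $k_A(M)\to k_A(M)\otimes_A B$ to be surjective (or the target to vanish), and then Nakayama finishes. The paper first reduces to the injective case and localizes at a maximal ideal (invoking stability of strong PSI under base change) before applying Nakayama to $B$ itself, whereas you work directly with the cokernel $C$ and its fibers; these are minor organizational variants of the same idea.
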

\begin{proof}
It suffices to consider the case of an injective   finite  strong PSI-morphism 
$A\hookrightarrow B$. Localizing (Theorem \ref{16}), we may assume that $A$ is local with maximal ideal $M$. As  $A\hookrightarrow B$ is a strong PSI-morphism, we get $A/M=B/MB$, hence $B=A+MB$, thus $A=B$ by Nakayama's Lemma.
\end{proof}

\begin{theorem}\label{301}
A finite type strong PSI-morphism is an epimorphism.
\end{theorem}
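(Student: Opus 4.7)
The plan is to consider the base-changed morphism $v := u\otimes_A B : B \to B\otimes_A B$, which is strong PSI by Theorem \ref{16} and of finite type by base change. The multiplication map $\mu : B\otimes_A B \to B$ is a ring retraction of $v$ (that is, $\mu\circ v = \mathrm{id}_B$), and $u$ is an epimorphism if and only if $\mu$, equivalently $v$, is an isomorphism. The whole argument thus reduces to the following general claim: \emph{a finite type strong PSI-morphism $v : C \to D$ that admits a ring retraction $\rho : D \to C$ is an isomorphism.} I would then apply this claim with $\rho = \mu$.

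To prove the claim, set $K := \ker(\rho)$, so that $D = v(C)\oplus K$ as $C$-modules via $d = v(\rho(d)) + (d - v(\rho(d)))$. Since $v$ is PSI, the spectral map $v^{*}$ is injective by Corollary \ref{7}; since $\rho$ is surjective (having the section $v$), $\rho^{*}$ is injective; combined with $v^{*}\circ\rho^{*} = \mathrm{id}$, both $v^{*}$ and $\rho^{*}$ are mutually inverse bijections on $Spec$. In particular $\rho^{*}$ is surjective, so every prime of $D$ contains $K$, i.e.\ $K$ lies in the nilradical of $D$. Now pick $C$-algebra generators $d_1,\dots,d_n$ of $D$ and replace each $d_i$ by $k_i := d_i - v(\rho(d_i)) \in K$: this gives $D = v(C)[k_1,\dots,k_n]$ with each $k_i$ nilpotent in $D$, so $D$ is spanned over $v(C)$ by the finitely many monomials in the $k_i$ of bounded degree. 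Hence $D$ is a finite $C$-module, and so is its $C$-summand $K$.

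Finally, strong PSI together with the surjectivity of $v^{*}$ gives $k_C(P)\otimes_C D = k_C(P)$ for every $P\in Spec(C)$, from which $k_C(P)\otimes_C K = 0$. Nakayama, applied in the local ring $C_P$ to the finitely generated module $K_P$, forces $K_P = 0$ for all $P$, so $K = 0$ and $\rho$ (hence $v$) is an isomorphism. The main obstacle is the second step: it is the combination of the retraction with the PSI property that pins $K$ into the nilradical, which is precisely what converts ``finite type as an algebra'' into ``finite as a module'' and thereby unlocks Nakayama at the end.
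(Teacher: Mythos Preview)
Your proof is correct and takes a genuinely different route from the paper's. The paper works with the module of differentials: it notes that $\Omega_{B/A}=I/I^2$ is a finitely generated $B$-module (since $I=\ker(\mu)$ is a finitely generated ideal when $u$ is of finite type), then uses the base-change formula for differentials together with the strong PSI hypothesis to get $\Omega_{B/A}\otimes_B k_B(Q)\simeq\Omega_{k_A(P)/k_A(P)}=0$ for every $Q\in\mathrm{Max}(B)$, applies Nakayama to conclude $\Omega_{B/A}=0$, and finally invokes a result of Lazard to pass from vanishing differentials (in the finite type situation) to epimorphism.

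Your argument bypasses differentials entirely and attacks $I=K$ directly. The decisive new idea is the spectral bijection coming from the retraction: $v^*\circ\rho^*=\mathrm{id}$ together with injectivity of $v^*$ (from PSI) forces $\rho^*$ to be surjective, hence $K$ lies in the nilradical. This is exactly what upgrades ``finite type'' to ``module-finite'', after which Nakayama applies to $K$ itself rather than to $K/K^2$. The payoff is a fully self-contained proof that cites nothing outside the paper, whereas the paper's version is shorter on the page but leans on Andr\'e for the properties of $\Omega$ and on Lazard for the final implication. Conceptually, your approach also makes transparent \emph{why} finite type is needed: without it the nilpotent generators trick fails and one is stuck at $K\subseteq\mathrm{nil}(D)$, which is precisely the situation in Example~\ref{18}.
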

\begin{proof}
Let $u:A\rightarrow B$ be a finite type strong PSI-morphism,
$v:B\otimes_A B\rightarrow B$ the canonical ring morphism  induced by $u$ and $I=ker(v)$. 
Then $I$ is a finitely generated ideal of $B\otimes_A B$, so 
$\Omega_{B/A}=I/I^2$ is a finitely generated $B$-module, cf. 
\cite[Lemme 1.7]{A}.
Let $Q\in Max(B)$ and $P=u^{-1}(Q)$. Since $u$ is a strong PSI-morphism, we get $k_A(P)=k_A(P)\otimes_A B=k_B(Q)$, so
$$\Omega_{B/A} \otimes_B k_B(Q) \simeq  
\Omega_{B/A} \otimes_A k_A(P) \simeq \Omega_{k_A(P)/k_A(P)}=0
$$
cf. \cite[Lemme 1.13]{A}. 
By Nakayama's Lemma, we get that 
  $\Omega_{B/A}$ is null because it is  a finitely generated $B$-module.
By \cite[Proposition 1.5]{L}, we obtain that $u$ is an epimorphism.
\end{proof}

For further use, we give the following result.

\begin{proposition}\label{24}
Let $\mathcal{P}$ be one of the following three conditions: 
PSI-morphism, strong PSI-morphism, epimorphism.
An inductive limit of $\mathcal{P}$-morphisms  is a 
$\mathcal{P}$-morphism.
\end{proposition}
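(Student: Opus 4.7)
Write the inductive system as $(A_i, \phi_{ij})$, $(B_i, \psi_{ij})$ with component morphisms $u_i : A_i \to B_i$ of the given type $\mathcal{P}$, and set $A = \varinjlim A_i$, $B = \varinjlim B_i$, $u = \varinjlim u_i$, with canonical maps $\phi_i : A_i \to A$, $\psi_i : B_i \to B$. For the PSI case I would argue directly from Definition \ref{5}: given an $A$-prime ideal $I$ of $B$, set $I_i = \psi_i^{-1}(I)$. A short verification using $u \circ \phi_i = \psi_i \circ u_i$ shows that $I_i$ is an $A_i$-prime ideal of $B_i$, proper because $1 \notin I$, hence prime in $B_i$ by hypothesis. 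To conclude that $I$ is prime in $B$, take $b, b' \in B$ with $bb' \in I$, choose $i$ large enough that $b = \psi_i(b_i)$ and $b' = \psi_i(b_i')$ for some $b_i, b_i' \in B_i$, observe $b_i b_i' \in I_i$, apply primality of $I_i$, and push the conclusion back through $\psi_i$. A fiber-theoretic approach is less convenient here, since a tensor product of two field extensions of $k_{A_i}(P_i)$ need not be a field.

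For the strong PSI case I would invoke Theorem \ref{13}(iii) and reduce to showing $\dim_{k_A(P)}\bigl(k_A(P) \otimes_A B\bigr) \leq 1$ for every $P \in \mathrm{Spec}(A)$. Setting $P_i = \phi_i^{-1}(P)$ and using that tensor product commutes with inductive limits, one obtains
\[
k_A(P) \otimes_A B \;=\; \varinjlim_i \bigl( k_A(P) \otimes_{k_{A_i}(P_i)} \bigl( k_{A_i}(P_i) \otimes_{A_i} B_i \bigr) \bigr).
\]
Each inner factor $k_{A_i}(P_i) \otimes_{A_i} B_i$ has $k_{A_i}(P_i)$-dimension $\leq 1$ by Theorem \ref{13} applied to $u_i$, so every term of the inductive system has $k_A(P)$-dimension $\leq 1$. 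An inductive limit of $k_A(P)$-vector spaces of dimension $\leq 1$ is again of dimension $\leq 1$: any two of its elements lift to a common index, where they are already linearly dependent.

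For the epimorphism case I would identify $B \otimes_A B$ with $\varinjlim_i (B_i \otimes_{A_i} B_i)$, once more by commutation of tensor products with inductive limits, and note that under this identification the multiplication $B \otimes_A B \to B$ becomes the inductive limit of the multiplications $B_i \otimes_{A_i} B_i \to B_i$. Each of the latter is an isomorphism by hypothesis, and inductive limits preserve isomorphisms, so $u$ is an epimorphism. The main obstacle in the whole argument is purely bookkeeping: checking that the $A$-module structure on $B = \varinjlim B_i$ and the $A_i$-module structures on the $B_i$ glue so that the needed commutations of tensor product with inductive limit apply cleanly. No single step presents a conceptual difficulty once the right characterization of $\mathcal{P}$ is chosen.
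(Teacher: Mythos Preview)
Your three arguments are all correct, but your PSI case takes a different route from the paper's, and your aside dismissing the fiber-theoretic approach is misplaced. The paper handles all three cases uniformly via fibers: starting from a prime $Q$ of $B$ and pulling back to $Q_i \in \mathrm{Spec}(B_i)$, $P_i \in \mathrm{Spec}(A_i)$, $P \in \mathrm{Spec}(A)$, it invokes Corollary~\ref{7}(ii) to get canonical isomorphisms $k_{A_i}(P_i)\otimes_{A_i}B_i = k_{B_i}(Q_i)$ and then passes to the limit to obtain $k_A(P)\otimes_A B = k_B(Q)$, a field. The point you missed is that one need not show abstractly that a limit of fields is a field; identifying each fiber with the specific residue field $k_{B_i}(Q_i)$ makes the limit automatically $k_B(Q)$. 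Your direct argument from Definition~\ref{5} is perfectly fine and arguably more elementary, but the paper's approach has the virtue of treating PSI, strong PSI, and epimorphism by the same template (an inductive limit of canonical isomorphisms is an isomorphism), whereas you switch characterizations between cases. For strong PSI your dimension-$\leq 1$ argument works but is slightly heavier than simply taking the limit of the isomorphisms $k_{A_i}(P_i) = k_{A_i}(P_i)\otimes_{A_i}B_i$ from Definition~\ref{11}; your epimorphism case matches the paper's in spirit.
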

\begin{proof}
Let $u:A\rightarrow B$ be the limit of the inductive system of ring morphisms $\{u_i:A_i\rightarrow B_i\}_{i\in I}$. Let $Q$ be a prime ideal of $B$ and $P_i$, $Q_i$, $P$ the inverse image of $Q$ in $A_i$, $B_i$, $A$ respectively. If all $u_i$'s are PSI-morphisms, then 
$k_{A_i}(P_i)\otimes_{A_i} {B_i}= k_{B_i}(Q_i)$, so taking the limit we get
$k_{A}(P_i)\otimes_{A} {B}= k_{B}(Q)$. Hence $u$ is a PSI-morphism.
A similar argument works in the other two cases.
\end{proof}

\begin{corollary}\label{27}
An inductive limit of finite type strong PSI-morphisms is an epimorphism.
\end{corollary}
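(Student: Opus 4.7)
The plan is to combine the two immediately preceding results, Theorem~\ref{301} and the epimorphism case of Proposition~\ref{24}, since together they cover exactly this assertion. Concretely, I would let $u:A\rightarrow B$ be the inductive limit of an inductive system $\{u_i:A_i\rightarrow B_i\}_{i\in I}$ of finite type strong PSI-morphisms.

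First, I would apply Theorem~\ref{301} individually to each index $i$: because each $u_i$ is a finite type strong PSI-morphism, each $u_i$ is already an epimorphism. This is the step that consumes the ``finite type'' hypothesis, which is needed solely at the level of the system and is not expected to survive to the limit $u$ itself. Next, with each $u_i$ an epimorphism, I would invoke Proposition~\ref{24} in the case $\mathcal{P}=\text{epimorphism}$ to conclude that the inductive limit $u$ is an epimorphism.

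There is no genuine obstacle here; the statement is essentially a two-line corollary of the preceding machinery. The only point worth flagging is that one should not expect $u$ itself to be of finite type, so the argument genuinely relies on the fact that ``epimorphism'' (unlike ``finite type'') is a property preserved under passage to inductive limits, which is precisely what Proposition~\ref{24} delivers.
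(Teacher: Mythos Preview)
Your proposal is correct and follows exactly the paper's own proof, which simply reads ``Combine Theorem~\ref{301} and Proposition~\ref{24}.'' Your additional remark that the limit $u$ need not be of finite type (so one must pass through the epimorphism property rather than apply Theorem~\ref{301} directly to $u$) is a helpful clarification but does not deviate from the intended argument.
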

\begin{proof}
Combine Theorem \ref{301} and Proposition \ref{24}.
\end{proof}

In particular, the morphism of Example \ref{18} cannot be written as an inductive limit of finite type strong PSI-morphisms.

\begin{remark}
We remark the following extension of part $(ix)$ of Theorem \ref{3}.
Let $A\subseteq B$ be a ring extension and let $\Gamma$ denote the set of all  rings between  $A$ and $B$. Suppose that every $C\in \Gamma$ is integrally closed in $B$. By \cite[Theorem 5.2, page 47]{KZ}, 
$A\subseteq C$ is a flat epimorphism  (hence a strong PSI-extension) for each $C\in \Gamma$.  
\end{remark}

The next result is a strong PSI-morphism variant of Theorem \ref{8}.

\begin{theorem}\label{12}
Suppose that $u:A\rightarrow B$ and  $v:B\rightarrow C$ are ring morphisms.

$(i)$ If $u$ and $v$ are strong PSI-morphisms, then so is $vu$.

$(ii)$ If $vu$ is a strong PSI-morphism, then so is $v$.
\end{theorem}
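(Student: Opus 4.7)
The plan is to reduce both parts to Theorem~\ref{8} together with the characterization in Theorem~\ref{13}$(ii)$, which says that a morphism is strong PSI precisely when it is PSI and the canonical residue field extensions $k_A(u^{-1}(Q)) \to k_B(Q)$ are isomorphisms. Part $(i)$ then amounts to showing that residue field triviality is preserved under composition, and part $(ii)$ amounts to showing it is inherited by the second factor.

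For $(i)$, Theorem~\ref{8}$(i)$ already supplies that $vu$ is a PSI-morphism, so by Theorem~\ref{13} it suffices to check the residue field condition. Fix $M \in \mathrm{Spec}(C)$ and set $Q = v^{-1}(M)$, $P = u^{-1}(Q) = (vu)^{-1}(M)$. The strong PSI hypothesis on $u$ yields $k_A(P) = k_B(Q)$, and the corresponding hypothesis on $v$ yields $k_B(Q) = k_C(M)$; composing gives $k_A(P) = k_C(M)$, which is exactly the residue field condition for $vu$.

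For $(ii)$, Theorem~\ref{8}$(ii)$ gives that $v$ is a PSI-morphism, so again by Theorem~\ref{13} it remains to verify that residue field extensions of $v$ are trivial. Keeping $M$, $Q$, $P$ as above, the canonical factorization
\[
k_A(P) \hookrightarrow k_B(Q) \hookrightarrow k_C(M)
\]
is a chain of injective field extensions whose composition is an isomorphism by the strong PSI assumption on $vu$. Since the first map is already surjective onto a field from which the second map extends to $k_C(M)$ bijectively, each arrow must be an isomorphism; in particular $k_B(Q) = k_C(M)$, so $v$ is strong PSI.

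No real obstacle arises here: the technical content already resides in Theorem~\ref{8} and the residue field characterization of Theorem~\ref{13}. The only mild subtlety, present in $(ii)$, is that the hypothesis on $vu$ only forces triviality of residue field extensions at primes $Q \in \mathrm{Spec}(B)$ that lift to $C$. This is insufficient to conclude that $u$ itself is strong PSI (and indeed it need not be), but it is exactly what is needed for $v$, since the primes of $B$ relevant to checking the strong PSI property of $v$ are precisely those in the image of $\mathrm{Spec}(C) \to \mathrm{Spec}(B)$.
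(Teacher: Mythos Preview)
Your proof is correct and follows essentially the same approach as the paper: reduce to Theorem~\ref{8} for the PSI part and use Theorem~\ref{13} for the residue-field condition, observing that in a tower of field extensions $k_A(P)\hookrightarrow k_B(Q)\hookrightarrow k_C(M)$ the composite is an isomorphism if and only if both factors are. The sentence in part~$(ii)$ beginning ``Since the first map is already surjective onto a field\ldots'' is awkwardly phrased, but the intended argument (an injective factorization of an isomorphism consists of isomorphisms) is clear and matches the paper's.
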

\begin{proof}
Let $M$ be a prime ideal of $C$, $Q=v^{-1}(M)$, $P=u^{-1}(Q)$.
We have the residue field extensions 
$k_A(P)\rightarrow  k_B(Q)\rightarrow k_C(M).$
So $k_A(P) = k_C(M)$ iff $k_A(P) =  k_B(Q)$ and 
$k_B(Q) = k_C(M).$
 Apply Theorem \ref{13} and Theorem \ref{8}.
\end{proof}

\begin{corollary}\label{19}
If $u:A\rightarrow B$ and  $v:B\rightarrow C$ are strong PSI-morphisms, then so is $A\rightarrow B\otimes_A C$.
\end{corollary}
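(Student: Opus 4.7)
The plan is to factor the canonical morphism $A\rightarrow B\otimes_A C$ through $B$ as the composition $A\xrightarrow{u} B \xrightarrow{w} B\otimes_A C$, where $w(b)=b\otimes 1$. The decisive observation is that $w$ is precisely the base change of the composed morphism $vu:A\to C$ along $u:A\to B$; that is, the map obtained by regarding $C$ as an $A$-algebra via $vu$ and tensoring with $B$ over $A$ to form a $B$-algebra via the first factor.

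With this factorization in hand, the result reduces to the two closure properties established earlier in the section: composition (Theorem \ref{12}) and base change (Theorem \ref{16}) for strong PSI-morphisms. First, I would apply Theorem \ref{12}(i) to $u$ and $v$ to conclude that $vu:A\to C$ is itself a strong PSI-morphism. Next, I would invoke the implication $(i)\Rightarrow(ii)$ of Theorem \ref{16}, applied to the strong PSI-morphism $vu:A\to C$ base-changed along $u:A\to B$, to deduce that $w:B\to B\otimes_A C$ is a strong PSI-morphism. Finally, a second invocation of Theorem \ref{12}(i), now to $u$ and $w$, delivers the conclusion that their composition $A\to B\otimes_A C$ is a strong PSI-morphism.

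No substantial obstacle is expected: the only subtlety is the identification of $w$ with the base change of $vu$ along $u$, which is immediate from the construction of $B\otimes_A C$. Everything else is a purely formal two-step application of the stability properties already proved.
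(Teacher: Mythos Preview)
Your proposal is correct and matches the paper's approach exactly: the paper's proof is the one-line ``Combine Theorems \ref{16} and \ref{12},'' and your factorization $A\xrightarrow{u} B\xrightarrow{w} B\otimes_A C$, with $w$ identified as the base change of the strong PSI-morphism $vu$ along $u$, is precisely how those two theorems combine.
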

\begin{proof}
Combine Theorems \ref{16} and \ref{12}.
\end{proof}

\begin{corollary}\label{20}
Let $u:A\rightarrow B$, $v:B\rightarrow C$, $t:C\rightarrow D$, 
$w:B\rightarrow D$ be ring morphisms such that 
 $wu=tv$ and   $D=w(B)t(C)$.
If $u$ and  $v$ are strong PSI-morphisms, then so is $A\rightarrow D$.
\end{corollary}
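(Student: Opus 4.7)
My plan is to realize $D$ as a surjective image of the tensor product $B\otimes_A C$, and then conclude from the stability results already established. Interpreting the hypothesis $wu=tv$ as the assertion that the square with sides $u:A\to B$, $vu:A\to C$, $w:B\to D$, $t:C\to D$ commutes (i.e.\ $wu=tvu$), the universal property of the tensor product supplies a unique ring morphism
$$\phi:B\otimes_A C\longrightarrow D,\qquad b\otimes c\mapsto w(b)t(c),$$
where $C$ carries the $A$-algebra structure induced by $vu$.

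The morphism $\phi$ is surjective: its image is the subring of $D$ generated by $w(B)$ and $t(C)$, which is precisely $w(B)t(C)=D$ by hypothesis. Any surjection is an epimorphism, so by Proposition \ref{14} $\phi$ is a strong PSI-morphism. On the other hand, Corollary \ref{19} applied to the strong PSI-morphisms $u$ and $v$ gives that the canonical map $\iota:A\to B\otimes_A C$ is itself a strong PSI-morphism. By Theorem \ref{12}(i), the composition $\phi\circ\iota:A\to D$ is a strong PSI-morphism, and a direct unfolding shows this composition agrees with the given map $wu=tvu:A\to D$.

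The main obstacle is really a bookkeeping one: verifying that $\phi$ is well-defined as a ring morphism from the tensor product, where the two $A$-algebra structures on $B$ and $C$ are those coming from $u$ and $vu$. The condition $wu=tvu$ is exactly what forces the map $(b,c)\mapsto w(b)t(c)$ to be $A$-balanced, so once the commutativity of the square is pinned down, the rest is a formal combination of Corollary \ref{19}, Proposition \ref{14}, and Theorem \ref{12}(i).
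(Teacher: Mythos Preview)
Your argument is essentially identical to the paper's own proof: apply Corollary \ref{19} to obtain that $A\to B\otimes_A C$ is strong PSI, observe that the canonical map $B\otimes_A C\to D$ is surjective (hence strong PSI), and compose. Your handling of the typographical issue in the hypothesis (where $wu=tv$ does not typecheck as stated, since $wu:A\to D$ while $tv:B\to D$) is reasonable; the intended condition is evidently the commutativity $wu=tvu$ needed for the universal map out of the tensor product, and the paper's proof implicitly relies on exactly this.
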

\begin{proof}
By Corollary \ref{19}, $A\rightarrow B\otimes_A C$ is a strong PSI-morphism. As the canonical map $B\otimes_A C\rightarrow D$ is surjective, we are done.
\end{proof}

The next result is in the spirit of \cite[Proposition 3.4]{L}.

\begin{proposition}\label{26}
Let $u:A\rightarrow B$ be a ring morphisms. There exists a greatest subring $C$ of $B$ containing $u(A)$ such that $u:A\rightarrow C$ is a strong
PSI-morphism. Call $C$ the {\em strong PSI-closure }of $A$ in $B$.
\end{proposition}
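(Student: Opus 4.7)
The plan is to exhibit the desired ring $C$ as the union of the family
$$\mathcal{F}=\{C'\mid u(A)\subseteq C'\subseteq B,\ u\colon A\to C'\ \text{is a strong PSI-morphism}\},$$
and to establish this by showing (a) $\mathcal{F}$ is nonempty, (b) $\mathcal{F}$ is directed under inclusion, and (c) $\mathcal{F}$ is closed under directed unions.

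For (a), the map $A\to u(A)$ is surjective, hence an epimorphism, hence a strong PSI-morphism by Proposition \ref{14}. So $u(A)\in\mathcal{F}$.

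For (b), given $C_1,C_2\in\mathcal{F}$, let $D=C_1[C_2]$ denote the subring of $B$ generated by $C_1\cup C_2$. By Corollary \ref{19}, the morphism $A\to C_1\otimes_A C_2$ is a strong PSI-morphism. The universal property of the tensor product gives a ring morphism $C_1\otimes_A C_2\to B$ whose image is exactly $D$, so the induced map $C_1\otimes_A C_2\twoheadrightarrow D$ is surjective, hence a (strong) PSI-morphism by Proposition \ref{14}. Applying Theorem \ref{12}(i) to the composite $A\to C_1\otimes_A C_2\twoheadrightarrow D$ yields that $A\to D$ is a strong PSI-morphism; hence $D\in\mathcal{F}$ is an upper bound of $C_1$ and $C_2$ in $\mathcal{F}$.

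For (c), let $\{C_i\}_{i\in I}\subseteq\mathcal{F}$ be directed. Then $\bigcup_{i}C_i$ is a subring of $B$ containing $u(A)$, and the morphism $u\colon A\to \bigcup_{i}C_i$ is the inductive limit of the strong PSI-morphisms $A\to C_i$, so is itself strong PSI by Proposition \ref{24}. Combining (b) and (c), the entire family $\mathcal{F}$ is directed, and the ring $C:=\bigcup_{C'\in\mathcal{F}}C'$ belongs to $\mathcal{F}$ and is its greatest element. By construction it contains every subring of $B$ over $u(A)$ which is strong PSI over $A$, proving the claim.

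The main obstacle is step (b): one must show that the compositum inside $B$ of two strong PSI subrings is again strong PSI. This cannot be read off from Theorem \ref{12} alone because the two subrings do not sit in a tower; the trick is to pass through the tensor product $C_1\otimes_A C_2$, which is strong PSI over $A$ by the base-change result (Corollary \ref{19}), and then reduce modulo the kernel of the canonical surjection onto $C_1[C_2]$ using the fact that surjections are strong PSI and that strong PSI-morphisms compose.
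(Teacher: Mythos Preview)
Your proof is correct and follows essentially the same route as the paper's. The paper's argument is terser: it cites Corollary~\ref{20} (which packages exactly your step~(b) --- passing through $C_1\otimes_A C_2$ and projecting onto the compositum) to conclude that $\mathcal{F}$ is directed, and then invokes Proposition~\ref{24} for the directed union, just as you do in step~(c); you have simply unfolded the proof of Corollary~\ref{20} inline rather than citing it.
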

\begin{proof}
$C$ is the directed union of the subrings $D$ of $B$ containing $u(A)$ such that $A\rightarrow D$ is a strong PSI-morphism, cf. Corollary \ref{20}. Apply Proposition \ref{24}.
\end{proof}

\begin{corollary}\label{25}
Let $A\subseteq B$ be an extension of domains where $A$ a Pr\"ufer domain with quotient field $K$. Then the PSI-closure of $A$ in $B$ is $K\cap B$.
\end{corollary}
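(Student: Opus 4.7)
The plan is to verify the two inclusions $K\cap B\subseteq C$ and $C\subseteq K\cap B$, where $C$ denotes the strong PSI-closure of $A$ in $B$ furnished by Proposition \ref{26} (the corollary clearly refers to this object, since no other ``closure'' has been defined).

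For the first inclusion, I would observe that $K\cap B$ sits between $A$ and $B$ (since $A$ embeds in both $K$ and $B$), so it is in particular an overring of $A$ contained in $B$. A classical result on Pr\"ufer domains says that every overring of a Pr\"ufer domain is a flat epimorphism; hence $A\hookrightarrow K\cap B$ is a ring epimorphism, and Proposition \ref{14} makes it a strong PSI-morphism. By the maximality of the strong PSI-closure, this forces $K\cap B\subseteq C$.

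For the reverse inclusion, since $C\subseteq B$ by construction, it suffices to show $C\subseteq K$. The subring $C$ of the domain $B$ is a domain, and $(0)\in Spec(C)$ contracts to $(0)\in Spec(A)$. Because $A\hookrightarrow C$ is a strong PSI-morphism, Definition \ref{11} applied at $P=(0)$ gives $k_A(0)=k_A(0)\otimes_A C$. Unwinding the tensor product: $k_A(0)=K$ and $k_A(0)\otimes_A C=S^{-1}C$ with $S=A\setminus\{0\}$, so $S^{-1}C=K$; in particular $C\subseteq S^{-1}C=K$, whence $C\subseteq K\cap B$.

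The only non-routine ingredient will be the classical Pr\"ufer fact used in the first step (every overring is a flat epimorphism), which is standard and can be cited; the second step is a direct application of the defining isomorphism of a strong PSI-morphism at the generic point of $C$.
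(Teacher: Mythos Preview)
Your proof is correct and follows essentially the same route as the paper's: both use the classical fact that every overring of a Pr\"ufer domain is a flat epimorphism (the paper cites \cite[Theorem 3.13, page 37]{KZ} and \cite[Corollary 6.5.19]{FHP}) to get $K\cap B\subseteq C$, and both use the strong PSI property to get $C\subseteq K\cap B$. The only difference is cosmetic: the paper declares $C\subseteq K\cap B$ to be ``clear,'' whereas you spell it out by computing the generic fiber $k_A(0)\otimes_A C=S^{-1}C=K$, which is exactly the content behind that word (equivalently, Theorem~\ref{13}$(ii)$ at $Q=(0)$ gives $k_A(0)=k_C(0)$, i.e.\ $A$ and $C$ have the same quotient field).
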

\begin{proof}
Let $C$ denote the PSI-closure of $A$ in $B$. It is clear that $C\subseteq K\cap B$. On the other hand, since $A$ is a Pr\"ufer domain, $A\subseteq D$ is an epimorphism cf. \cite[Theorem 3.13, page 37]{KZ} and \cite[Corollary 6.5.19]{FHP}, hence a strong PSI-extension, for each overring $D$ of $A$. Thus $C = K\cap B$.
\end{proof}

\begin{example}
For a field $K$, the PSI-closure of $K[X]$ in $K[[X]]$ is contained in 
$K(X)\cap K[[X]]=K[X]_{(X)}$, cf. Corollary  \ref{25}.
\end{example}

We close our paper by giving two more constructions of PSI-morphisms
(Propositions  \ref{17} and \ref{170}).

\begin{proposition}\label{17}
Let $A\subseteq B$ be ring extension and $I$  a common ideal of $A$ and $B$. Then $A\subseteq  B$ is a PSI-extension (resp. strong PSI-extension) iff  $A/I\subseteq B/I$ is a PSI-extension (resp. strong PSI-extension).
\end{proposition}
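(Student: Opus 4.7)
The plan is to reduce everything to the fiber criterion of Theorem \ref{2} (for PSI) and Theorem \ref{13} (for strong PSI), and then split $\mathrm{Spec}(A)$ into primes containing $I$ and primes not containing $I$. Since the primes of $A/I$ correspond bijectively to primes $P\supseteq I$ of $A$, it will suffice to prove (a) for $P\supseteq I$ the fibers of $A\hookrightarrow B$ at $P$ and of $A/I\hookrightarrow B/I$ at $P/I$ are canonically isomorphic, compatibly with the canonical maps from $k_A(P)=k_{A/I}(P/I)$; and (b) for $P\not\supseteq I$ the fiber of $A\hookrightarrow B$ is already equal to $k_A(P)$, so both PSI and strong PSI hold automatically at such $P$.

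Step (a) is a routine localization-then-quotient computation: with $S=A\setminus P$, one has $k_A(P)\otimes_A B = B_P/PB_P$, and since $I\subseteq P$, modding out by $I$ first and then localizing/quotienting by $P/I$ gives the same ring, with compatible maps from $k_A(P)$.

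Step (b) is the main point. Given $P\in\mathrm{Spec}(A)$ with $I\not\subseteq P$, choose $a\in I\setminus P$. Because $I$ is an ideal of $B$, we have $aB\subseteq I\subseteq A$; because $a\in A\setminus P$, the element $a$ is invertible in $A_P$. Multiplying the inclusion $aB\subseteq A$ by $a^{-1}\in A_P$ inside $B_P$ gives $B\subseteq A_P$, hence $B_P=S^{-1}B\subseteq A_P\subseteq B_P$, so $B_P=A_P$. Therefore the fiber $B_P/PB_P$ equals $k_A(P)$, the canonical map $k_A(P)\to k_A(P)\otimes_A B$ is the identity, and both the PSI condition (fiber is a field or zero) and the strong PSI condition (fiber is $k_A(P)$) are satisfied trivially at $P$.

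Combining (a) and (b), the (strong) PSI condition at every $P\in\mathrm{Spec}(A)$ reduces to the (strong) PSI condition at every prime $P/I$ of $A/I$, which by Theorems \ref{2} and \ref{13} yields the claimed equivalence. I expect step (b), the identification $B_P=A_P$ when $P$ misses $I$, to be the only nonformal ingredient; everything else is bookkeeping via Theorem \ref{2} and Theorem \ref{13}. (One direction is also recovered directly from Corollary \ref{10}(i), which can serve as a sanity check on the argument.)
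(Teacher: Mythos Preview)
Your proof is correct and follows essentially the same route as the paper: the key nonformal step, showing $A_P=B_P$ whenever $P\not\supseteq I$ via an element $a\in I\setminus P$ with $aB\subseteq A$, is exactly what the paper does, and the identification of fibers for $P\supseteq I$ is likewise the same. The only cosmetic difference is that the paper dispatches the ``only if'' direction by a single reference (to the base-change stability of strong PSI, Theorem~\ref{16}; your mention of Corollary~\ref{10}(i) is the more direct reference for the plain PSI case), whereas you handle both directions symmetrically through the fiber computation---which is arguably cleaner since it avoids needing separate citations for PSI and strong PSI.
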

\begin{proof}
The "only if part" follows from Theorem \ref{16}. For the "if part", let $P\in Spec(A)$. Suppose that $P\not\supseteq I$ and pick $f\in I-P$. Then $fB\subseteq A$, so $A_P=B_P$, hence $k_{A}(P)=k_{A}(P)\otimes_{A} B$.
To complete the proof, it suffices to see that when $P\supseteq I$, we have 
$k_{A/I}(P/I)=k_{A}(P)$ and 
$k_{A/I}(P/I)\otimes_{A/I} B/I = k_{A}(P)\otimes_{A} B$.
\end{proof}

\begin{corollary}
A ring extension $A\subseteq  B$  is a PSI-morphism (resp. strong PSI-morphism) iff $A+XB[X] \subseteq  B[X]$ is a PSI-morphism (resp. strong PSI-morphism).
\end{corollary}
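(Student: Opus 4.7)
The plan is to apply Proposition \ref{17} directly, taking $I = XB[X]$ as the common ideal of $A+XB[X]$ and $B[X]$. The only verifications needed are therefore purely formal.

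First I would check that $XB[X]$ is an ideal of both rings. It is plainly an ideal of $B[X]$, and it is contained in $A+XB[X]$. For $f \in A+XB[X]$, write $f = a + Xg$ with $a\in A$ and $g\in B[X]$; if $h = Xh_1 \in XB[X]$ with $h_1\in B[X]$, then $fh = aXh_1 + Xgh \in XB[X]$, so $XB[X]$ is an ideal of $A+XB[X]$ as well. Hence the hypothesis of Proposition \ref{17} is satisfied.

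Next I would identify the quotient rings. The map $A+XB[X]\to A$ sending $a+Xg\mapsto a$ is a surjective ring morphism with kernel $XB[X]$, giving $(A+XB[X])/XB[X] \cong A$. The evaluation map $B[X]\to B$ at $X=0$ gives $B[X]/XB[X] \cong B$. Under these identifications, the inclusion $(A+XB[X])/XB[X]\hookrightarrow B[X]/XB[X]$ becomes the original inclusion $A\hookrightarrow B$.

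Now Proposition \ref{17} says that $A+XB[X]\subseteq B[X]$ is a PSI-extension (respectively, strong PSI-extension) if and only if its quotient by the common ideal $XB[X]$, namely $A\subseteq B$, is a PSI-extension (respectively, strong PSI-extension). This is exactly the required statement. There is no real obstacle here; the content is all carried by Proposition \ref{17}, and the only mild check is the elementary verification that $XB[X]$ sits as an ideal inside $A+XB[X]$.
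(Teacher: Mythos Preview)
Your proof is correct and is exactly the argument the paper intends: the corollary is stated immediately after Proposition~\ref{17} with no proof, since applying that proposition with the common ideal $I = XB[X]$ yields the claim directly. Your verification that $XB[X]$ is an ideal of $A+XB[X]$ and that the quotient extension is canonically $A\subseteq B$ fills in precisely the routine details the paper leaves implicit.
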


We give a power series application of Proposition \ref{17} which extends part $(xii)$ of Theorem \ref{3}.

\begin{proposition}\label{29}
Let $A\subseteq B$ be ring extension such that some maximal ideal $M$ of $B$ is contained in $A$. Then  $A[[X]]\subseteq  B[[X]]$ is a PSI-extension.
\end{proposition}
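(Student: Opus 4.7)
The plan is to apply Proposition \ref{17} with the common ideal $I = M[[X]]$ of $A[[X]]$ and $B[[X]]$. First I would verify this makes sense: because $M \subseteq A$, the set $M[[X]]$ lies inside $A[[X]]$, and a coefficient-wise check confirms that it is an ideal of both rings, using that $M$ is an ideal of $B$ (and hence of $A$). Via the natural isomorphisms $A[[X]]/M[[X]] \cong (A/M)[[X]]$ and $B[[X]]/M[[X]] \cong (B/M)[[X]]$, Proposition \ref{17} then reduces the problem to showing that $R[[X]] \subseteq K[[X]]$ is a PSI-extension, where $R := A/M$ and $K := B/M$ is a field (since $M$ is maximal in $B$); note that $R$ is automatically a domain, being a subring of $K$.

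For this last reduction I would invoke Theorem \ref{2}: every fiber of $R[[X]] \hookrightarrow K[[X]]$ must be zero or a field. Since $K$ is a field, $K[[X]]$ is a discrete valuation ring with only primes $(0)$ and $(X)$, whose contractions to $R[[X]]$ are $(0)$ and $XR[[X]]$; by Corollary \ref{7} every other prime of $R[[X]]$ therefore has vanishing fiber. A direct computation yields
$$k_{R[[X]]}(0)\otimes_{R[[X]]}K[[X]] \;=\; (R[[X]]\setminus\{0\})^{-1}K[[X]] \;=\; K((X)),$$
since the localizing set contains the uniformizer $X$; and
$$k_{R[[X]]}(XR[[X]])\otimes_{R[[X]]}K[[X]] \;=\; K[[X]]/XK[[X]] \;=\; K,$$
because each element of $R[[X]]\setminus XR[[X]]$ has nonzero constant term in $R \subseteq K$, hence is already a unit in $K[[X]]$, so the localization is trivial. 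Both fibers are fields, which finishes the argument.

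The only step requiring genuine thought is the choice of the common ideal $M[[X]]$; once one reduces modulo $M$ to the case in which the larger ring is a power series ring over a field, the DVR structure of $K[[X]]$ forces the fiber analysis into two transparent computations.
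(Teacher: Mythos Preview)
Your proof is correct and follows essentially the same route as the paper: reduce via Proposition~\ref{17} with the common ideal $M[[X]]$ to the case of a domain inside a field, then compute the two nonvanishing fibers of $R[[X]]\hookrightarrow K[[X]]$ using the DVR structure of $K[[X]]$. One small citation slip: the fact that primes of $R[[X]]$ outside the image of $\mathrm{Spec}(K[[X]])$ have vanishing fiber is the elementary observation recorded in Notation~\ref{15}, not Corollary~\ref{7} (which already assumes the morphism is PSI and would make the reference circular).
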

\begin{proof}
As $M[[X]]$ is a common ideal of $A[[X]]$ and $B[[X]]$, it suffices to show that  $(A/M)[[X]]\subseteq  (B/M)[[X]]$ is a PSI-morphism,
cf. Proposition \ref{17}. In other words, 
it suffices to show that  $A[[X]]\subseteq  B[[X]]$ is a PSI-morphism
when $A$ is a domain and $B$ is a field. Since the prime ideals of $B[[X]]$ are $0$ and $XB[[X]]$, it suffices to see that
$$k_{A[[X]]}(0)\otimes_{A[[X]]} B[[X]] = B[[X]][1/X]=B((X))$$ and 
$$k_{A[[X]]}(XA[[X]])\otimes_{A[[X]]} B[[X]] = B[[X]]/(X)=B.$$ 
\end{proof}

\begin{proposition}\label{170}
Let $u:A\rightarrow B$, $v:A\rightarrow C$ be ring morphisms and $\alpha$, $\beta$ the images in $Spec(A)$ of $Spec(B)$, $Spec(C)$ respectively. Let 
$\mathcal{P}$ be one of the following three conditions: 
PSI-morphism, strong PSI-morphism, epimorphism. Then the diagonal map $w:A\rightarrow B\times C$ is $\mathcal{P}$-morphism iff 
 $u$, $v$ are $\mathcal{P}$ and $\alpha\cap \beta = \emptyset$.
\end{proposition}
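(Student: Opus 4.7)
The plan is to reduce everything to a fiber-ring computation. The key structural facts are that $Spec(B\times C)$ is the disjoint union $Spec(B)\sqcup Spec(C)$ via $Q\mapsto Q\times C$ and $R\mapsto B\times R$, and that at each $P\in Spec(A)$ the fiber of $w$ decomposes as
$$k_A(P)\otimes_A (B\times C) \;=\; \bigl(k_A(P)\otimes_A B\bigr)\times\bigl(k_A(P)\otimes_A C\bigr).$$
The rigidity driving the whole argument is that a product of two nonzero rings is neither zero nor a field.

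For the forward direction, assume $w$ is of type $\mathcal{P}$. The projections $\pi_B$ and $\pi_C$ are surjective, hence epimorphisms, hence strong PSI (Proposition \ref{14}) and hence PSI (Proposition \ref{9}(ii)). Composing, $u=\pi_B w$ and $v=\pi_C w$ are of type $\mathcal{P}$ by Theorems \ref{8}(i), \ref{12}(i), together with the standard closure of epimorphisms under composition. For the disjointness $\alpha\cap\beta=\emptyset$: any $P$ in the intersection would produce $Q\in Spec(B)$ and $R\in Spec(C)$ lying over $P$, hence two distinct primes $Q\times C\neq B\times R$ of $B\times C$ both contracting to $P$ via $w$, contradicting injectivity of the spectral map of $w$ (Corollary \ref{7}(i), noting that each of the three conditions implies PSI).

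For the converse, suppose $u,v$ are of type $\mathcal{P}$ and $\alpha\cap\beta=\emptyset$. In the PSI and strong PSI cases, at each $P\in Spec(A)$ disjointness leaves at most one factor of the fiber nonzero, so the fiber collapses to $0$, $k_A(P)\otimes_A B$, or $k_A(P)\otimes_A C$; each of these is $0$ or a field (respectively, equals $k_A(P)$) by the hypothesis on $u,v$. Theorem \ref{2} and Definition \ref{11} then conclude these two cases.

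The main obstacle is the epimorphism case, where the fiber criterion alone is insufficient. Expanding the tensor product gives
$$(B\times C)\otimes_A (B\times C) \;\cong\; (B\otimes_A B)\times(B\otimes_A C)\times(C\otimes_A B)\times(C\otimes_A C)$$
(four mutually orthogonal summands cut out by the idempotents $(1,0),(0,1)$), and the multiplication map to $B\times C$ annihilates the two cross terms while reducing to the canonical multiplications on the diagonal terms. The diagonal parts become isomorphisms since $u,v$ are epimorphisms, so what remains is to prove $B\otimes_A C=0=C\otimes_A B$. The clean route is spectral: any prime of $B\otimes_A C$ restricts to primes of $B$ and $C$ sharing a common image in $Spec(A)$, which would force $\alpha\cap\beta\neq\emptyset$; so $Spec(B\otimes_A C)=\emptyset$, and a ring with empty spectrum is zero since every nonzero ring has a maximal ideal. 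The same reasoning kills $C\otimes_A B$, and $w$ is an epimorphism.
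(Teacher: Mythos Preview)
Your proof is correct and follows essentially the same route as the paper: the fiber decomposition $k_A(P)\otimes_A(B\times C)\cong(k_A(P)\otimes_A B)\times(k_A(P)\otimes_A C)$, composition with the projections to transfer $\mathcal{P}$ to $u$ and $v$, and the spectral argument showing $B\otimes_A C=0$ for the epimorphism case. The only cosmetic difference is that for the forward disjointness you exhibit two distinct primes over $P$ and invoke Corollary~\ref{7}(i), whereas the paper reads it off directly from the fact that a product of two nonzero rings is not a field; these are two phrasings of the same observation.
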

\begin{proof}
The assertion is a consequence of the following three remarks. 
If $w$ is a (strong) PSI-morphism, compose it with the canonical projections  to get that $u$, $v$ are (strong) PSI-morphism, cf. Theorem \ref{8}.
For $P\in Spec(A)$, if
$$k_{A}(P)\otimes_{A} B\times C = k_{A}(P)\otimes_{A} B\times k_{A}(P)\otimes_{A} C$$  is a field, then $k_{A}(P)\otimes_{A} B=0$ or $k_{A}(P)\otimes_{A} C=0$. Note that $B\otimes_A C=0$, otherwise for any   prime ideal $P$ of $B\otimes_A C$,  the preimage of $P$ in $A$ belongs to $\alpha\cap \beta$. Thus $(B\times C)\otimes_A (B\times C)=(B\times C)$.
\end{proof}

For instance, the diagonal map 
$ \mathbb{Q}[[X]]\rightarrow \mathbb{R}((X))\times \mathbb{Q}$ is a PSI-morphism.


\end{document}